\newcommand{\R}{\mathbb R}
\newcommand{\E}{\mathbb E}
\newcommand{\var}{\mathrm{var}}
\renewcommand{\epsilon}{\varepsilon}
\newcommand{\e}{\mathrm e}
\newcommand{\id}{\mathrm{Id}}
\renewcommand{\phi}{\varphi}
\renewcommand{\tilde}{\widetilde}
\newtheorem{theorem}{Theorem}
\newtheorem{lemma}[theorem]{Lemma}
\newtheorem{proposition}[theorem]{Proposition}
\newtheorem{corollary}[theorem]{Corollary}
\theoremstyle{remark}
\newtheorem*{remark}{Remark}
\theoremstyle{definition}
\begin{document}

\title{Convergence in total variation for the kinetic Langevin algorithm} 
\author{Joseph Lehec\footnote{Universit\'e de Poitiers, CNRS, LMA, Poitiers, France}}

\maketitle

\begin{abstract}
We prove non asymptotic total variation estimates for 
the kinetic Langevin algorithm in high dimension when the target measure 
satisfies a Poincar\'e inequality and has gradient Lipschitz potential. 
The main point is that the estimate improves significantly upon the  
corresponding bound for the non kinetic version of the algorithm, 
due to Dalalyan. In particular the dimension dependence drops 
from $O(n)$ to $O(\sqrt n)$. 
\end{abstract}

\section{Introduction}
\subsection{Context} 
Suppose we want to sample from a probability measure $\mu$ on 
$\R^n$ of the form 
\[ 
\mu (dx) = \e^{-V(x)} \,dx 
\] 
where $V$ is some smooth function from $\R^n$ to $\R$
(never mind the precise hypothesis for now) which 
we call the potential of $\mu$. 
This is a very common problem in 
applied mathematics, it shows up in many different contexts, 
from Bayesian statistics, to optimization, machine learning 
and many more. We will not discuss applications here at all.  
Instead we focus on the sampling problem from 
a theoretical point of view. We shall investigate a particular 
algorithm called the \emph{kinetic Langevin algorithm}. This is 
an order $1$ algorithm, in the sense that it relies 
on the knowledge of the gradient of $V$. 
Therefore it is assumed throughout this article that
there is some oracle that given a point $x$ in $\R^n$ 
returns $\nabla V (x)$. 
In this context the complexity of the sampling 
algorithm is the number of oracle queries. 

\subsection{The Langevin algorithm and its kinetic version}
Consider the following 
stochastic differential equation 
\[ 
d X_t = \sqrt 2 \, dW_t - \nabla V (X_t) \, dt , 
\]
where $(W_t)$ is a standard Brownian motion on $\R^n$. 
Under mild hypotheses the equation admits 
a unique strong solution, which is a Markov process, for which $\mu$ 
is the unique stationary measure. Moreover the process 
is ergodic, in the sense that we have convergence 
of $X_t$ to $\mu$ in law as $t$ tends to $+\infty$. 
The Langevin algorithm is the Markov chain induced by 
the Euler scheme associated to this diffusion. This means that given a 
time step parameter $\eta$, the algorithm is given by   
\[ 
x_{k+1} = x_k + \sqrt{2 \eta} \, \xi_{k+1} - \eta \nabla V ( x_k) , 
\] 
where $(\xi_k)$ is an i.i.d. sequence of standard Gaussian vectors on $\R^n$. 
There is quite a lot of literature studying the performance of this algorithm,
either empirically or from a more theoretical point of view. 
Explicit non asymptotic bounds seem to have started with the seminal 
work of Dalalyan~\cite{dalalyan1} to which we will come back later on. 

Let us move on to the kinetic version of the algorithm. 
The main idea is to add another variable which is interpreted 
as a speed variable. We thus consider the stochastic differential 
equation on $\R^{n}\times \R^n$ given by 
\begin{equation}\label{eq_diff_kin}
\left\{
\begin{array}{l}
d X_t = Y_t \, dt \\
d Y_t = \sqrt{2\beta} \, dW_t - \beta Y_t \,dt - \nabla V(X_t) \, dt .
\end{array} 
\right. 
\end{equation}
where $(W_t)$ is a standard Brownian motion 
on $\R^n$ and $\beta$ is a positive parameter, 
called the \emph{friction} parameter hereafter. 
Note that this equation is degenerate, in the sense that we only 
have a diffusion on the speed variable and not on the space 
variable. The kinetic version of the Langevin 
diffusion is sometimes called \emph{underdamped} Langevin 
diffusion, whereas the usual diffusion~\label{eq_langevin} 
is called \emph{overdamped}. We will use both terminologies here, 
so underdamped and overdamped are synonyms for kinetic and non kinetic, 
respectively. The underdamped diffusion
admits a unique stationary distribution, namely the measure 
$\pi := \mu \otimes \gamma$, where $\gamma$ is the standard Gaussian measure 
on $\R^n$. Under mild assumptions, the diffusion is also ergodic, and 
$(X_t,Y_t)$ converges in distribution to $\pi$. As far as sampling is concerned what matters is that the first factor is our target measure $\mu$,
so that the position $X_t$ converges to $\mu$.
The kinetic Langevin algorithm is the algorithm obtained 
by discretizing the diffusion~\eqref{eq_diff_kin}. 
Namely we fix a time step parameter $\eta$ and 
we consider the following system of equations
\begin{equation}\label{eq_euler2}
\begin{cases} 
d X^\eta_t = Y^\eta_t dt \\
d Y^\eta_t = \sqrt{2\beta}\, d W_t - \beta Y^\eta_t \, dt - \nabla V ( X^\eta_{\lfloor t/\eta \rfloor\eta } ) \, dt ,  
\end{cases}  
\end{equation} 
where $\eta$ is the time step and $\lfloor \cdot \rfloor$ denotes the integer 
part. Thus the only difference with~\eqref{eq_diff_kin} is that in the equation 
for the speed, the gradient is not queried at the current position but 
at some past position, corresponding to 
latest integer multiple of $\eta$. We initiate this at some (possibly random) 
point $(x_0,y_0)$ and we set $(x_k,y_k) = ( X^\eta_{k\eta}, Y^\eta_{k\eta})$
for every integer $k$. The process $(x_k,y_k)$ is Markov chain whose transition kernel is explicit. Namely, the transition measure at point 
$(x,y)$ is the Gaussian measure on $\R^n \times \R^n$, 
centered at point $(x',y')$ 
and with covariance matrix 
$\begin{pmatrix} a & c \\ c  & b \end{pmatrix} \otimes I_n$ 
where
\[ 
\begin{split} 
& x' = x + \frac{1-\e^{-\beta \eta}} \beta y - \frac{\e^{-\beta \eta} - 1 + \beta \eta}{\beta^2} \nabla V (x) \\
& y' = \e^{-\beta \eta} y - \frac{ 1 - \e^{-\beta \eta}}\beta \nabla V (x) \\
& a = \frac 1 {\beta^2} ( 4 \e^{-\beta \eta} - \e^{-2\beta \eta} + 2 \beta \eta - 3 ) \\
& b = 1 - \e^{-2\beta \eta} \\
& c = \frac 1{\beta} ( 1 - \e^{-\beta \eta} )^2 .  
\end{split} 
\] 
Indeed the solution of the 
equation~\eqref{eq_euler2} on $[0,\eta)$ is completely explicit. Namely, 
if we start from $(x,y)$ then we have
\[
Y^\eta_t = \e^{-\beta t} y - \frac{1-\e^{-\beta t}}{\beta}\nabla V(x) 
+ \sqrt{2\beta} \int_0^t \e^{\beta(s-t)} \, dW_s , \quad \forall t \leq \eta . 
\] 
From this we also get an explicit formula 
for $X^{\eta}_t$.
This shows that conditioned on the initial 
point $(x,y)$ the random vector $(X^\eta_\eta, Y^\eta_{\eta})$ is 
a Gaussian vector on $\R^{n} \times \R^n$. Finding the different parameters 
is only a matter of computation which is omitted here. See e.g. \cite{cheng_et_al,ZCLBE} for more details. 

Thus the transition kernel is just a Gaussian kernel 
with a somewhat intricate but completely explicit 
covariance matrix, which depends only on the the friction 
parameter $\beta$ and the time step $\eta$, and not 
on the potential $V$. The potential $V$ only appears via 
its gradient in the center of mass of the Gaussian 
kernel. Each step of this Markov chain is thus easy to 
sample, under the assumption that we have an oracle for 
$\nabla V$. 

This Markov chain is what we call the kinetic Langevin algorithm 
associated to $\mu$. It depends on two parameters, the 
friction parameter $\beta$ and the discretization parameter $\eta$. 
We will show that if those parameters are chosen appropriately 
then after a polynomial number of steps the 
distribution of $(x_k,y_k)$ 
is very close to $\pi$.

\subsection{Main result} 
We will quantify the sampling error in terms of total variation distance: 
if $\mu$ and $\nu$ are two probability measures defined on the 
same space $E$ equipped with some $\sigma$-field $\mathcal A$ then 
\[ 
TV ( \mu ,\nu ) = \sup_{A\in \mathcal A} \{ \vert \mu (A) -\nu(A)\vert \}. 
\] 
A related notion is the chi-square divergence, defined by 
\[ 
\chi_2 ( \mu \mid \nu ) = \int_E  \frac{ d\mu}{d\nu} \, d \mu -1  ,  
\] 
assuming that $\mu$ is absolutely continuous with respect to $\nu$. 
If this is not the case the convention is that the chi-square divergence 
is $+\infty$. The chi-square divergence is not a distance, it is not symmetric 
and it does not satisfy the triangle inequality either. However it 
controls the total variation distance. Indeed we have  
\[ 
TV ( \mu , \nu ) \leq \sqrt{ \log ( 1 + \chi_2 ( \mu \mid \nu ) ) } \leq \sqrt{ \chi_2 ( \mu \mid \nu ) }. 
\]
By a slight abuse of notations, when $X,Y$ are random vectors
we shall often write $TV ( X,Y )$ for $TV(\mu,\nu)$ 
where $\mu,\nu$ are the respective laws of $X,Y$. We will 
adopt a similar convention for the chi-square divergence. 
For instance we shall write $\chi_2 ( X \mid \nu)$ 
for the relative chi-square divergence of the law of $X$
with respect to the measure $\nu$. 
 
There will be two hypotheses for the target measure. First of all 
we assume that $\mu$ satisfies the Poincar\'e inequality. Namely we assume that there is 
a constant $C_P$ such that for any locally Lipschitz function $f$ 
we have 
\begin{equation}\label{eq_poincare} 
\var_\mu (f) \leq C_P \int_{\R^n} \vert \nabla f\vert^2 \,d\mu .
\end{equation} 
The second hypothesis is a regularity hypothesis: We assume 
that the potential $V$ of $\mu$ is $\mathcal C^2$-smooth with Lipschitz
gradient. The Lipschitz 
constant of $\nabla V$ will be denoted by $L$ throughout. 
We shall give two estimates. 
One that is valid under the above two assumptions only, and one slightly 
better, under the additional assumption that $\mu$ is log-concave, 
in the sense that the potential $V$ is a convex function. 
\begin{theorem}\label{thm_main}
Suppose that $\mu$ satisfies Poincar\'e with contant $C_P$ and has $\mathcal C^2$-smooth and gradient Lipschitz potentiel, with constant $L$. 
Assume that the kinetic Langevin algorithm is initiated at $(x_0,y_0)$ 
with $x_0$ independent of $y_0$, and $y_0$ distributed according to the 
standard Gaussian measure on $\R^n$. Fix the friction 
parameter $\beta = \sqrt L$, and set the time step parameter $\eta$
and the number of steps $k$ by
\[ 
\begin{split}
\eta & = c \cdot  \epsilon L^{-1} C_P^{-1/2} \cdot 
\left( \sqrt{n} + \sqrt{ \log (1+\chi_2 (x_0\mid \mu )) } \right)^{-1} \cdot \log^{-1/2} \left(\frac{ \chi_2 (x_0\mid \mu) }{\epsilon } \right) ,   \\
k & = C  \cdot \epsilon^{-1} (LC_P)^{3/2} \cdot 
\left( \sqrt{n} + \sqrt{ \log (1+\chi_2 (x_0\mid \mu )) } \right) \cdot \log^{3/2} \left(\frac{ \chi_2 (x_0\mid \mu) }{\epsilon } \right) , 
\end{split}  
\]  
where $c,C$ are universal constants. 
Then we have 
\[ 
TV(x_k,\mu) \leq \epsilon . 
\] 
If in addition $\mu$ is log-concave then we have the same result with 
friction $\beta = C_P^{-1/2}$, time step $\eta$ as above, and 
number of steps $k$ given by
\[ 
k = C  \cdot \epsilon^{-1} LC_P \cdot 
( \sqrt{n} + \sqrt{ \log (1+\chi_2 (x_0\mid \mu )) } ) \cdot \log^{3/2} \left(\frac{ \chi_2 (x_0\mid \mu) }{\epsilon } \right) . 
\]  
\end{theorem}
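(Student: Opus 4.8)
The plan is to interpolate between the discrete algorithm and the continuous underdamped diffusion, and to exploit the exponential convergence of the latter in chi-square divergence (which follows from the hypocoercivity theory for the kinetic Fokker--Planck equation under a Poincaré inequality). Concretely, write $\nu_k$ for the law of $(x_k,y_k)$ and $\pi=\mu\otimes\gamma$. I would bound $\chi_2(\nu_k\mid\pi)$ — or rather a modified, twisted chi-square functional adapted to the non-reversible dynamics — by splitting the error into a \emph{bias} term coming from the discretization of the drift and a \emph{relaxation} term coming from the mixing of the exact diffusion. The key identity is that running the algorithm for time $\eta$ is the same as running the \emph{exact} diffusion with the gradient frozen at its value at the start of the step; so on each step the algorithm and the true diffusion are coupled and differ only because $\nabla V(X^\eta_{\lfloor t/\eta\rfloor\eta})$ replaces $\nabla V(X^\eta_t)$. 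Since $\nabla V$ is $L$-Lipschitz and, along a step of length $\eta$, the position moves by roughly $\eta y + O(\eta^2)$ with $y$ of size $O(\sqrt n)$ in the stationary regime, the per-step drift error is of order $L\eta\cdot\eta\sqrt n$ in an $L^2(\pi)$ sense, i.e. $O(L\eta^2\sqrt n)$ — this is where the $\sqrt n$ rather than $n$ enters, and it is the heart of the kinetic improvement.

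The steps, in order, would be: (1) Record the exact-diffusion contraction: under Poincaré with constant $C_P$ and friction $\beta=\sqrt L$ (resp. $\beta=C_P^{-1/2}$ in the log-concave case), the semigroup of~\eqref{eq_diff_kin} contracts a suitable Lyapunov/entropy-type functional dominating $\chi_2(\cdot\mid\pi)$ at rate $\lambda\asymp 1/(C_P\sqrt L)$ (resp. the better rate in the convex case); this is standard hypocoercivity (Villani / Cao--Lu--Wang style), and I would cite or reproduce it in a preliminary lemma. (2) Quantify the one-step discretization error: using the explicit Gaussian transition kernel and the coupling above, bound the divergence between one step of the algorithm and one step of the exact diffusion, starting from $\nu_k$, controlling moments of $|y|$ and $|x-x_\star|$ along the way; the smoothness of the Gaussian kernel (its nondegenerate covariance, entries $a,b,c$ of order $\eta^3,\eta,\eta^2$) is what lets a drift perturbation translate into a finite chi-square/TV cost, with a factor like $\eta^{-?}$ absorbed against the $\eta^2\sqrt n$ size of the perturbation. (3) Combine via a discrete Grönwall / telescoping argument: $\chi_2(\nu_{k+1}\mid\pi)\le (1-\lambda\eta)\,\chi_2(\nu_k\mid\pi)+(\text{per-step error})^2$ plus cross terms, so that after $k$ steps $\chi_2(\nu_k\mid\pi)\lesssim e^{-\lambda k\eta}\chi_2(\nu_0\mid\pi)+\lambda^{-1}\eta^{-1}(\text{per-step error})^2$. (4) Handle the initialization: since $y_0\sim\gamma$ exactly, $\chi_2(\nu_0\mid\pi)=\chi_2(x_0\mid\mu)$, which feeds the $\log\chi_2(x_0\mid\mu)$ and $\sqrt{\log(1+\chi_2(x_0\mid\mu))}$ factors in $\eta$ and $k$. (5) Optimize: choose $\eta$ so the bias term is $\lesssim\epsilon^2$ and $k$ so the relaxation term is $\lesssim\epsilon^2$, i.e. $k\eta\asymp\lambda^{-1}\log(\chi_2(x_0\mid\mu)/\epsilon)$; plugging in $\lambda\asymp(C_P\sqrt L)^{-1}$ and the per-step error $\asymp L\eta^2\sqrt n$ (more precisely with $\sqrt n+\sqrt{\log(1+\chi_2(x_0\mid\mu))}$ in place of $\sqrt n$, from the moment bounds on the non-stationary start) gives exactly the stated $\eta$ and $k$, and then $TV\le\sqrt{\chi_2}\le\epsilon$.

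The main obstacle I expect is step (2) together with making step (3) close: chi-square divergence does not obey a triangle inequality, so one cannot naively add the relaxation and bias errors. The standard fix is to work not with $\chi_2$ directly but with a weighted/twisted second-moment functional $\int (d\nu_k/d\pi - 1)^2 w\,d\pi$ for a carefully chosen weight $w$ (or equivalently a distorted norm on $L^2(\pi)$) for which the exact semigroup is a genuine contraction, and to prove a \emph{one-step} perturbation estimate in that same norm — this is exactly the device that makes hypocoercive $L^2$ estimates work. Getting the one-step bias estimate in this twisted norm, with the right powers of $\eta$ and the right $\sqrt n$ dependence, and checking that the cross terms in the expansion $\|\,\cdot\,\|^2\le\|(1-\lambda\eta)(\cdots)+(\cdots)\|^2$ are genuinely lower-order (they will be, by Young's inequality, at the cost of replacing $1-\lambda\eta$ by $1-\lambda\eta/2$), is the technical core. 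The log-concave refinement is then just the observation that convexity of $V$ improves the hypocoercive rate $\lambda$ (allowing the larger, better-tuned friction $\beta=C_P^{-1/2}$), which propagates through the optimization to remove one factor of $\sqrt{LC_P}$ from $k$.
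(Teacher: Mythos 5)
There is a genuine gap, and it sits exactly where you predicted the ``technical core'' would be: steps (2)--(3), the one-step perturbation estimate in a (possibly twisted) chi-square norm followed by a discrete Gr\"onwall recursion. To write $\chi_2(\nu_{k+1}\mid\pi)\le(1-\lambda\eta)\chi_2(\nu_k\mid\pi)+(\text{error})^2$ you must bound the divergence between $\nu_k Q_\eta$ and $\nu_k P_\eta$ (algorithm step versus exact step from the \emph{same} law $\nu_k$) using only the information you are propagating, namely the $L^2(\pi)$ norm of the relative density of $\nu_k$. No such bound is available: a small pointwise drift perturbation does not control the chi-square (or any weighted $L^2$) divergence between the two output laws unless you also have regularity of $\nu_k$ itself --- relative Fisher information, or density bounds --- and nothing in the hypotheses, nor in the hypocoercive contraction, supplies such regularity along the discrete chain. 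This is not a presentational issue: the paper explicitly lists obtaining a chi-square (or relative entropy) bound at the output as an open problem, notes that the Vempala--Wibisono interpolation argument (which is the known way to make exactly this kind of one-step/differential comparison work in the overdamped case) does not obviously adapt to the kinetic dynamics, and points out that the closest related work (Zhang et al.) needs a bounded-Fisher-information warm start precisely to control a discretization term of this type. Your proposed fix (a twisted norm plus Young's inequality on the cross terms) addresses the lack of a triangle inequality but not this regularity obstruction, so the recursion cannot be closed as stated. Your per-step heuristic $L\eta^2\sqrt n$ is the right order of magnitude for the drift error, but the moment bound $\E|y_k|^2\lesssim n+\log(1+\chi_2(x_0\mid\mu))$ itself has to be proved (in the paper it comes from a bootstrap combining Talagrand's $T_2$ inequality with the very discretization bound being established, plus Gr\"onwall), so it cannot simply be assumed ``in the stationary regime''.

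For contrast, the paper never propagates a divergence through the discrete chain. It glues the two error sources in total variation, which does satisfy the triangle inequality: $TV(x_k,\mu)\le TV((x_k,y_k),(X_t,Y_t))+TV((X_t,Y_t),\pi)$ with $t=k\eta$. The first term is controlled on path space by Girsanov: the interpolated algorithm~\eqref{eq_euler2} is the true diffusion driven by a shifted Brownian motion, so the relative entropy of its law with respect to the law of the true diffusion over the whole horizon $[0,t]$ is $\frac1{4\beta}\int_0^t\E|\nabla V(X^\eta_s)-\nabla V(X^\eta_{\lfloor s/\eta\rfloor\eta})|^2ds\lesssim \frac{L^2\eta^2}{\beta}\,t\,\bigl(n+\log(1+\chi_2(x_0\mid\mu))\bigr)$, and Pinsker converts this to TV; the second term uses the hypocoercive decay $\chi_2(\nu P_t\mid\pi)\le 2\e^{-c\beta t/(1+(\beta^2+\kappa)C_P)}\chi_2(\nu\mid\pi)$ together with $TV\le\sqrt{\chi_2}$. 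Optimizing $t$ and then $\eta$ exactly as in your step (5) gives the stated parameters. If you want to salvage your scheme, the realistic options are either to reformulate your comparison as this single path-space entropy estimate (which is what actually works), or to add a hypothesis of bounded initial Fisher information and follow the Zhang et al. route --- but then you lose the $O(\sqrt n)$ dimension dependence for polynomial warm starts that the theorem claims.
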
 
Some comments are in order. First of all the constants $c,C$ could be explicitly 
tracked down from the proof, but we have chosen not to do so in order to 
lighten the exposition. In particular the proof has not been optimized so as
minimize the value of $C$. That being said, our proof does not yield horribly 
large constants either, see the discussion right after Theorem~\ref{thm_hypo} below.

Secondly, 
the result depends on some \emph{warm start} hypothesis. Namely 
the algorithm must be initiated at some point $x_0$ for which we have 
some control on the chi-square divergence 
to the target measure. Assuming that this chi-square divergence is 
polynomial in the dimension, 
we then get 
\[ 
O^*( \epsilon^{-1} ( L C_P )^{3/2} \cdot n^{1/2} )
\] 
complexity for the kinetic Langevin algorithm in general and 
$O^*( \epsilon^{-1} L C_P \cdot n^{1/2} )$ complexity in the 
log-concave case. The notation $O^*$ means up to a multiplicative 
constant and possibly  poly-logarithmic factors. This is a common notation when studying the complexity of algorithms. Note that even in the more pessimistic 
scenario were the chi-square divergence of the initial distribution is exponentially 
large in the dimension, Theorem~\ref{thm_main} still provides polynomial bounds, though with a worst dependence on the dimension. 

Coming back to the situation where a polynomial warm-start is available, 
our result should be compared with 
the complexity one gets for the overdamped version of 
the algorithm under the same set of hypothesis. 
The state of the art is Dalalyan~\cite{dalalyan1}, 
which gives convergence after $O^*(\epsilon^{-2} (L C_P)^2 n )$ 
steps of the algorithm. The result in~\cite{dalalyan1} is 
not quite written this way but the above complexity 
does follow from the proof, see the discussion at 
the end of the introduction of~\cite{lehecAAP}. 
Well, in this discussion the assumption 
is that $\mu$ satisfies the log-Sobolev inequality, 
and that we have a warm start in the relative entropy sense, 
but the argument is exactly the same assuming Poincar\'e 
and a chi-square divergence warm start assumption.

The most important improvement given by the kinetic version
is probably the dependence in the dimension, which drops from $O(n)$ to 
$O(\sqrt n)$, but it should be noted that also the dependence on the precision $\epsilon$, on the Lipschitz constant $L$ of the potential and on the 
Poincar\'e constant $C_P$ of the measure are definitely 
improved by the kinetic version of the 
algorithm, all the more so in the log-concave case. 
 
\subsection{The hypocoercive estimate} 
The proof of the main theorem splits into two parts: firstly we need 
to estimate the speed of convergence of the true diffusion towards its 
equilibrium measure and secondly to control the discretization error. For 
the first part the difficulty lies in the fact that the diffusion 
is degenerate, in the sense that there is only a diffusion 
term in the speed variable and not in the space variable. 
This implies
that we cannot expect an exponential decay of the 
chi-square divergence along the diffusion of the form: 
\[ 
\chi_2 ( \nu P_t \mid \pi ) \leq \e^{-ct} \chi_2 ( \nu \mid \pi ) , 
\] 
where $c$ is the positive constant. Here $\nu$ is 
some probability measure on the product space $\R^n \times \R^n$ 
and $(P_t)$ denotes the semigroup associated to the kinetic Langevin equation~\eqref{eq_diff_kin}. In other words $\nu P_t$ denotes the law of 
$(X_t,Y_t)$ when $(X_0,Y_0)$ is distributed according to $\nu$. 
The terminology is that the diffusion fails to be coercive. However, 
there is no obvious obstruction to having 
this inequality with a prefactor. Such estimates are called hypocoercive 
estimates, and there are a number of them available in the 
literature. The result that we shall use is essentially 
from Cao, Lu and Wang~\cite{cao_lu_wang}, but for reasons to 
be explained later on we will reprove the result from scratch rather 
than taking it for granted. 
In terms of hypothesis, the hypocoercive estimate still requires 
Poincar\'e, but a weaker condition than gradient Lipschitz 
potential is enough. Indeed, it only requires a semi-convexity property. 
Namely, the assumption is that there 
exists a constant $\kappa \geq 0$ such that the 
potential $V$ of $\mu$ has the property that 
$V(x) + \frac\kappa2 \vert x\vert^2$ 
is a convex function of $x$. Note that if $\nabla V$ is 
Lipschitz with constant $L$ then this holds true 
with $\kappa = L$ but the converse is not true.
\begin{theorem}\label{thm_hypo}
If $\mu$ has $\mathcal C^2$-smooth and semi-convex potential, 
with constant $\kappa \geq 0$, and 
satisfies Poincar\'e with constant $C_P$, then 
for every probability measure $\nu$ on $\R^n \times \R^n$ we have
\[ 
\chi_2 ( \nu P_t \mid \pi )  \leq 2 \cdot \exp \left( - c\cdot \frac{ \beta t}{ 1 + ( \beta^2 + \kappa ) C_P } \right)  
\cdot \chi_2 ( \nu \mid \pi)  , \quad \forall t >0
\] 
where $\beta$ is the friction parameter of the kinetic Langevin 
diffusion and $c$ is a positive universal constant. 
\end{theorem}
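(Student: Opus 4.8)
The plan is to work directly in $L^2(\pi)$, where the chi-square divergence is just a squared norm, and to run the $L^2$ hypocoercivity scheme of Dolbeault, Mouhot and Schmeiser, which is essentially the $L^2$ form of the Cao--Lu--Wang~\cite{cao_lu_wang} computation; reproving it from scratch is what will allow one to track the precise dependence on $\beta$, $\kappa$ and $C_P$ and to work with merely semi-convex potentials. We may assume $\nu \ll \pi$ with $g := d\nu/d\pi \in L^2(\pi)$, since otherwise the right-hand side is infinite. On $L^2(\pi)$ the generator of~\eqref{eq_diff_kin} splits as $L = S + A$, with $S := \beta(\Delta_y - y\cdot\nabla_y)$ self-adjoint and non-positive and $A := y\cdot\nabla_x - \nabla V(x)\cdot\nabla_y$ antisymmetric --- the algebraic expression of reversibility of the diffusion for $\pi$ up to the flip $y \mapsto -y$. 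Since $\int\phi\,d(\nu P_t) = \int P_t\phi\,d\nu = \int\phi\,(P_t^\ast g)\,d\pi$, the density of $\nu P_t$ with respect to $\pi$ is $h_t = P_t^\ast g$, which solves $\partial_t h_t = (S-A)h_t$; setting $u_t := h_t - 1$ one gets a mean-zero solution of $\partial_t u = (S-A)u$ with $\|u_t\|_{L^2(\pi)}^2 = \chi_2(\nu P_t\mid\pi)$. Everything thus reduces to proving $\|u_t\|^2 \le 2\,\e^{-\lambda t}\|u_0\|^2$ with $\lambda$ of the claimed order.

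Let $\Pi$ be the orthogonal projection onto functions of $x$ alone, $\Pi u(x) := \int u(x,y)\,d\gamma(y)$, which is exactly the projection onto $\ker S$. The Gaussian Poincar\'e inequality in the velocity variable gives $-\langle Su, u\rangle = \beta\|\nabla_y u\|^2 \ge \beta\|(I-\Pi)u\|^2$, so the symmetric part already dissipates the velocity fluctuation $(I-\Pi)u$ at rate $\beta$; the crux is to dissipate $\Pi u$ as well. For that, set $\Lambda := (A\Pi)^\ast(A\Pi)$; since $A\Pi\phi = y\cdot\nabla_x\phi$ and $\int y_iy_j\,d\gamma = \delta_{ij}$, one computes that $\Lambda$ is the weighted Laplacian $-\Delta_x + \nabla V\cdot\nabla_x$ on $L^2(\mu)$, whose spectral gap on mean-zero functions is at least $1/C_P$ by~\eqref{eq_poincare}. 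I would then introduce the modified functional
\[
\mathcal E(u) := \|u\|^2 + 2\epsilon\,\langle Bu, u\rangle, \qquad B := (1 + \delta + \Lambda)^{-1}(A\Pi)^\ast,
\]
with parameters $\epsilon > 0$ and $\delta \ge 0$ to be fixed. The general bound $\|(1+\delta+\Lambda)^{-1}(A\Pi)^\ast\| \le \tfrac12$ shows $B$ is bounded, so $\mathcal E$ is finite on all of $L^2(\pi)$ and $(1-\epsilon)\|u\|^2 \le \mathcal E(u) \le (1+\epsilon)\|u\|^2$ for $\epsilon$ small; this resolvent is exactly the regularization that makes the argument go through for rough initial data.

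The heart of the proof is the differential inequality $\tfrac{d}{dt}\mathcal E(u_t) \le -\lambda\,\mathcal E(u_t)$. Differentiating and expanding $\tfrac{d}{dt}\langle Bu,u\rangle$ via $\Pi A\Pi = 0$, the commutators of $\nabla_x,\nabla_y$ with $S$ and $A$, and the identity $(A\Pi)^\ast(A\Pi) = \Lambda$, the leading contribution is $-\langle(1+\delta+\Lambda)^{-1}\Lambda\,\Pi u,\Pi u\rangle \le -(1+(1+\delta)C_P)^{-1}\|\Pi u\|^2$, which together with $2\langle Su,u\rangle \le -2\beta\|(I-\Pi)u\|^2$ yields a genuinely negative main part. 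Everything else is an error: the pieces coming from $S$ carry factors of $\beta$; those coming from $A(I-\Pi)u$ are bounded by $\|(I-\Pi)u\|(\|\Pi u\|+\|(I-\Pi)u\|)$ using boundedness of the relevant products of $B$, $\Lambda^{1/2}(1+\delta+\Lambda)^{-1}$, $(1+\delta+\Lambda)^{-1/2}(A\Pi)^\ast$ and $\nabla_y$; and --- the one genuinely delicate term --- there is a Hessian contribution of the shape $\langle(\nabla^2 V)\nabla_x\phi,\nabla_x\psi\rangle_{L^2(\mu)}$ with $\phi,\psi$ in the range of resolvents of $\Lambda$. Only semi-convexity is available, so $\nabla^2 V$ need not be bounded above; the remedy is the Bochner identity
\[
\|\Lambda\phi\|_{L^2(\mu)}^2 = \|\nabla_x^2\phi\|_{L^2(\mu)}^2 + \langle(\nabla^2 V)\nabla_x\phi,\nabla_x\phi\rangle_{L^2(\mu)},
\]
which gives $-\kappa\|\nabla_x\phi\|^2 \le \langle(\nabla^2 V)\nabla_x\phi,\nabla_x\phi\rangle \le \|\Lambda\phi\|^2$, i.e. the Hessian form is controlled by $\kappa\Lambda + \Lambda^2$ --- tame enough once the factors $(1+\delta+\Lambda)^{-1}$ are used, provided $\delta$ is chosen of order $\kappa$. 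Absorbing all errors into the main part by Young's inequality leaves a constraint $\epsilon \lesssim \beta\,(1+(\beta^2+\kappa)C_P)^{-1}$, and with this choice one gets $\tfrac{d}{dt}\mathcal E(u_t) \le -c\epsilon\,\mathcal E(u_t)$, hence exponential decay at rate $\lambda \asymp \beta\,(1+(\beta^2+\kappa)C_P)^{-1}$; integrating and using $(1-\epsilon)\|u_t\|^2 \le \mathcal E(u_t) \le \e^{-\lambda t}\mathcal E(u_0) \le (1+\epsilon)\e^{-\lambda t}\|u_0\|^2$ with $\epsilon$ small gives $\chi_2(\nu P_t\mid\pi) \le 2\,\e^{-\lambda t}\chi_2(\nu\mid\pi)$.

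I expect two things to be the real work. First, the bookkeeping of the many error terms, so that $\epsilon$ and $\delta$ can be tuned to produce precisely the rate $\beta/(1+(\beta^2+\kappa)C_P)$ rather than something weaker: it is the competition between the $(I-\Pi)u$ dissipation (rate $\beta$) and the $\Pi u$ dissipation (rate $\asymp \epsilon\,(1+(1+\delta)C_P)^{-1}$), through the absorption of the $S$-errors, that produces the $\beta^2 C_P$ in the denominator, mirroring the fact that at large friction the dynamics degenerates into a time-slowed overdamped one. Second, handling the Hessian term under only the one-sided bound $\nabla^2 V \succeq -\kappa I$; it is the Bochner identity above that makes this possible and that pins $\delta$ to the scale of $\kappa$. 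A final, routine, point is to justify the computations --- in particular $\tfrac{d}{dt}\|u_t\|^2 = 2\langle Su_t,u_t\rangle$ --- for merely $L^2(\pi)$ initial data, by approximation with smooth data together with hypoellipticity of the kinetic Fokker--Planck equation (which makes $u_t$ smooth and $\mathcal E(u_t)$ absolutely continuous for $t>0$), the $L^2$-contraction on a short initial interval absorbing any loss.
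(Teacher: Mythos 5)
Your proof takes a genuinely different route from the paper's. You run the Dolbeault--Mouhot--Schmeiser $L^2$ scheme: a modified energy $\mathcal E(u) = \|u\|^2 + 2\epsilon\langle Bu,u\rangle$ with $B=(1+\delta+\Lambda)^{-1}(A\Pi)^*$, a Gr\"onwall inequality $\frac{d}{dt}\mathcal E \le -\lambda\mathcal E$, and the equivalence $\mathcal E \asymp \|u\|^2$ to strip off the perturbation at the end. The paper explicitly declines that road and instead proves a \emph{space-time} Poincar\'e inequality (following Albritton--Armstrong--Mourrat--Novack and Cao--Lu--Wang): one shows
\[
\int_0^T \var_\pi(P_t^*f)\,dt \lesssim \bigl(T^2 + T^{-2} + \beta^2 + \kappa\bigr)\int_{[0,T]\times\R^{2n}}|\nabla_y P_t^*f|^2\,dt\,d\pi,
\]
which is obtained by solving the divergence equation $\partial_t u + L_\mu v = Mf$ in $[0,T]\times\R^n$ with Dirichlet boundary conditions in time (Proposition~\ref{prop_key}), the construction of $u,v$ being carried out via spectral theory in the sine basis; iterating over time windows of length $T\asymp 1$ then gives the exponential decay. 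Both routes, done carefully, produce a state-independent prefactor: in your case because $\mathcal E\asymp\|u\|^2$ with universal constants once $\epsilon\le 1/3$, in the paper's case because each window loses only a universal factor. It is also reassuring that the same Bochner identity (the paper's Lemma~\ref{lemma_boc2} and Corollary~\ref{cor_bochner}) is what makes semi-convexity usable in both arguments --- you use it to dominate the Hessian form by $\kappa\Lambda+\Lambda^2$ inside the resolvent, the paper uses it to get estimate $iv)$ in Proposition~\ref{prop_key}.

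The one place where I would push back is the claim that the bookkeeping closes with $\epsilon\lesssim \beta/(1+(\beta^2+\kappa)C_P)$ and rate $\lambda\asymp\epsilon$. In the DMS dissipation the macroscopic piece contributes $\epsilon\,\langle (1+\delta+\Lambda)^{-1}\Lambda\,\Pi u,\Pi u\rangle \ge \epsilon\,(1+(1+\delta)C_P)^{-1}\|\Pi u\|^2$; since $\delta\asymp\kappa$ this is $\epsilon/(1+(1+\kappa)C_P)$, not $\epsilon$, so the decay rate you actually obtain is $\epsilon/(1+(1+\kappa)C_P)$. To match the target $\beta/(1+(\beta^2+\kappa)C_P)$ you then need the constraint on $\epsilon$ coming from the $S$-error and the $A(I-\Pi)$-error absorption to allow $\epsilon$ roughly of size $\min(1,(1+\kappa C_P)/(\beta C_P))$, and that requires a sharper operator-norm estimate on $B S$ and $BA(I-\Pi)$ than the generic DMS bound (in particular, exploiting that $(A\Pi)^*$ carries a derivative so that $B S$ really costs only $\beta$ and not $\beta$ times a spectral-gap factor). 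This is exactly the bookkeeping you flag as ``the real work,'' and I don't see anything in the sketch that would make it fail, but as written the constants do not yet visibly resolve to the stated rate; the paper side-steps this by making the rate appear directly as $T\beta/(T^2+T^{-2}+\beta^2+\kappa)$ after the window argument, so there is no delicate $\epsilon$-tuning to carry out.
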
 
It should be noted that most hypocoercivity results are established with a prefactor 
that depends on the initial measure $\nu$. A nice feature of this one is that the prefactor is just a universal constant. 

Again, the focus of this work is on the theoretical aspects of the kinetic Langevin 
algorithm rather than its practical implementation, but for this result we 
do provide a version of the inequality that is completely 
free of hidden constants, namely:
\[ 
\chi_2 ( \nu P_t \mid \pi )  \leq \exp \left( - \frac{ \beta t}{ 10 \cdot \left( 3 + \beta \sqrt{C_P} + 2 \sqrt{1 + \kappa C_P} \right)^2 } + \frac 1{60} \right)  
\cdot \chi_2 ( \nu \mid \pi)  , \quad \forall t >0 . 
\]  
This is obtained by simply keeping track of the various constants 
involved in the proof of the theorem, see section~\ref{sec_hypo} below. 
\subsection{Related works}
The fact that the kinetic version of the algorithm
has better performance was already observed in a number of 
works. Maybe the first theoretical results in this 
direction are~\cite{gad_mic,gui_mon} in which the case where the 
target measure is Gaussian is studied in details. Of course a Gaussian 
target measure is of little interest for sampling but the point 
there was to show that when the friction parameter is set appropriately then 
the kinetic diffusion is faster than the overdamped one. 
As far as sampling is concerned, relevant references 
include~\cite{dal_riou,cheng_et_al} in which 
non asymptotic bounds for the kinetic Langevin algorithm 
are established. The two results are very similar and differ in 
two ways from the present article. Firstly the performance is measured 
in terms of the Wasserstein $2$ distance (i.e. the optimal transportation 
associated to the cost function $\vert x-y\vert^2$) 
rather than total variation. Also the hypothesis is more stringent, the 
target measure is assumed to be uniformly log-concave and 
gradient Lipschitz. In other words the potential $V$ is assumed to satisfy 
\begin{equation}\label{eq_9191}
m I_n \leq \nabla^2 V (x) \leq M I_n 
\end{equation}
for all $x$ and where $m,M$ are positive constants. Notice that 
such a potential is gradient Lipschitz with constant $L\leq M$ 
and satisfies Poincar\'e with constant $C_P \leq m^{-1}$. 
Nevertheless~\cite{dal_riou,cheng_et_al} establish that 
all things equal (namely under the assumption~\eqref{eq_9191}
and as far as the Wassertein distance is concerned)
the underdamped version outperforms the best bounds available for 
the overdamped algorithm, which were established previously 
in~\cite{durm_moul,dalalyan2}. 
In particular it is shown that the dimension dependence drops from $O(n)$ to 
$O (\sqrt n)$, as in the present work. 
In~\cite{MCCFBJ} a relative entropy estimate for the kinetic Langevin 
algorithm is established. Although relative entropy controls 
total variation (see section~\ref{sec_discr} below) this
does not recover our main result. First of all the result from~\cite{MCCFBJ} 
is proven under the assumption that the target measure satisfies a 
log-Sobolev inequality, which is a stronger hypothesis than Poincaré, 
and more importantly it is only partly 
quantitative, in the sense that the dependence on certain parameters 
of the problem is not made explicit. The reference that comes 
closer to our work is \cite{ZCLBE} (which we were not aware of 
until the first version of this paper was released). 
At a high level the results and methods of 
proof there are very similar to what is done in the current work. 
However our analysis of the discretization error is much 
simpler and also allows to capture more accurately the dependence 
on the initial condition. In~\cite{ZCLBE} the convergence is 
established for a specific warm start condition for which both 
the log chi-square divergence and the Fisher information are order $n$ 
(essentially). In such a situation the authors of~\cite{ZCLBE} obtain convergence in total variation after $O^* ( \epsilon^{-1} (L C_P)^{3/2} n^2 )$ steps of the algorithm in general and $O^* ( \epsilon^{-1} L C_P n^2 )$ steps in the log-concave case. While it is true that when $\log \chi_2 (x_0\mid\mu) = O^* (n)$ we get exactly the same complexity, our result has the advantage of not requiring bounded Fisher information. Also we get a better dimension dependence if one happens to have a better warm start
hypothesis. In particular as we mentioned already the dimension 
dependence becomes as low as $O^* (\sqrt n)$ when the initial chi-square divergence is polynomial in the dimension. This does not seem to follow from 
the analysis of~\cite{ZCLBE}.   

Let us discuss also the literature on the Hamiltonian Monte Carlo algorithm, which is a sampling  algorithm very much related to the kinetic Langevin algorithm. 
It is based on the observation that the Hamiltonian dynamic 
\begin{equation}\label{eq_HMC}
\begin{cases} 
y'(t) = x(t) \\
x'(t) = - \nabla V (y(t)) 
\end{cases} 
\end{equation}
preserves the Lebesgue measure as well as the potential 
$\mathcal H(x,y) := V(x) + \frac{\vert y\vert^2}2$. As a result it also 
preserves the probability measure $\pi$. The HMC process is the 
piecewise deterministic process obtained by choosing a time step 
$\delta$, resampling the speed $y$ at every integer multiple 
of $\delta$ and following the system of equations~\eqref{eq_HMC} 
in between. Of course this does not admit an explicit 
solution and in order to turn this ideal HMC dynamic into a proper 
algorithm one needs to replace the Hamiltonian dynamic phase 
by a Euler type discretization. The resulting algorithm looks a lot like the kinetic Langevin algorithm, and in particular the two algorithms should have pretty much the same performance.
Non asymptotic theoretical bounds for the HMC algorithm 
are established in~\cite{MS1,MS2,chen_vempala}. As 
for the works on the kinetic Langevin algorithm mentioned above, 
they prove convergence estimates in the Wasserstein sense and 
under the assumption that the Hessian of the potential of the target measure
is bounded from above and below (by a positive constant). The result from~\cite{chen_vempala} 
gives essentially the same estimate as what~\cite{cheng_et_al,dal_riou} 
get for the kinetic Langevin algorithm.  

We have only presented a very short selection of 
the literature on the Langevin and the HMC algorithms. 
For instance there are also many references where the focus 
is on the discretization scheme. 
Indeed, we used the most natural one in this paper but there 
exist variants in the literature. These typically yield better 
dependence on the error $\epsilon$ at the cost of 
higher order regularity estimates on the potential. 
See~\cite{CDMS} and the references therein.    

\subsection{Perspectives} 
Firstly we conjecture that Theorem~\ref{thm_main} should hold true for 
the HMC algorithm as well. 
Maybe more interestingly, one drawback  with our result is that we lose regularity from the hypothesis to the conclusion. We prove a total 
variation estimate under a warm start hypothesis in the chi-square sense. 
It would be more satisfactory to get a chi-square estimate in the conclusion as well. Note that the chi-square divergence controls the Wasserstein distance under 
Poincaré (see~\cite{Liu}). Therefore such a result would imply also a convergence 
estimate for the Wasserstein distance, similar to that from the aforementioned works~\cite{dal_riou,cheng_et_al}, but under significantly weaker hypothesis on the target measure. 
In the same way, it would be interesting to have an analogue result for the relative entropy (both in the hypothesis and 
in the conclusion) under a log-Sobolev hypothesis for the target measure. 
In the overdamped version of the algorithm this task was
completed by Vempala and Wibisono~\cite{VW} but it 
is not clear at all whether their approach can be adapted to the kinetic 
Langevin algorithm. 

\paragraph{Acknowledgment.} The author is grateful to 
Arnaud Guillin, Pierre Monmarch\'e and Matthew Zhang
for discussions related to this work and/or 
pointing out relevant references. We would also like to thank 
the two anonymous referees for their careful reading of the manuscript
and accurate comments. 

\section{The hypocoercive estimate} \label{sec_hypo}
It\^o's formula shows that the generator of the kinetic 
diffusion~\eqref{eq_diff_kin} is the operator 
\[ 
\mathcal L f(x,y) = \beta \Delta_y f (x,y) + \nabla_x f (x,y) \cdot y 
-  \beta \nabla_y f (x,y) \cdot y - \nabla_y f (x,y) \cdot \nabla V(x) .    
\]  
Integrating by parts we see that the probability measure $\pi$ 
on $\R^n \times \R^n$ whose density is proportional to 
\[ 
\e^{-V(x)} \cdot \e^{ - \frac{\vert y\vert^2}{2} } 
\] 
is stationary. One important fact is that the diffusion $(X_t,Y_t)$ is 
not reversible. In terms of the operator $\mathcal L$, this means 
that the operator is not symmetric in $L^2 (\pi)$. More
precisely, direct calculations show that the part  
\[ 
\Delta_y f (x,y) -  \nabla_y f (x,y) \cdot y =: \mathcal L_{\rm OU} f
\] 
is symmetric and that the part 
\[  
\nabla_x f (x,y) \cdot y 
 - \nabla_y f (x,y) \cdot \nabla V(x) =: \mathcal L_{\rm Ham} f
\] 
is antisymmetric. The indices \emph{OU} and \emph{Ham} stand 
for Ornstein-Ulhenbeck and Hamiltonian, respectively. 
As a result the adjoint operator is $\mathcal L^* = \beta \mathcal L_{\rm OU} - \mathcal L_{\rm Ham}$. 
We let $(P_t)$ be the semigroup with generator $\mathcal L$, 
and $(P_t^*)$ be the adjoint semigroup. In other words, 
if $\nu$ is a probability measure on $\R^n \times \R^n$ which 
is absolutely continuous with respect to $\pi$, 
with density $f$, then $\nu P_t$ has density $P_t^* f$ 
with respect to $\pi$. Moreover the chi-square divergence 
between $\nu$ and $\pi$ is nothing but the variance of the relative 
density, so that $\chi_2( \nu P_t \mid \pi) = \var_\pi ( P_t^* f )$. 
Therefore Theorem~\ref{thm_hypo} can be reformulated as follows. 
\begin{theorem} \label{thm_hypo2}
If the potential of $\mu$ is $\mathcal C^2$-smooth and semi-convex, with constant $\kappa\geq 0$, and if $\mu$ satisfies
Poincar\'e with constant $C_P$, then for every function 
$f\in L^2 (\pi)$ we have
\[ 
\var_\pi ( P_t^* f ) \leq 2 \exp \left( -c \cdot \frac{ \beta t} { 1 +(\beta^2+\kappa) C_P } \right)  \var_\pi (f) .  
\]
\end{theorem}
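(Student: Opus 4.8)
The plan is to run the $L^2$-hypocoercivity method of Dolbeault, Mouhot and Schmeiser, in the form later made explicit by Cao, Lu and Wang~\cite{cao_lu_wang}, for the operator $\mathcal L^*=\beta\mathcal L_{\rm OU}-\mathcal L_{\rm Ham}$; the whole difficulty is to carry it out under the single assumption $\nabla^2 V\geq-\kappa\,\mathrm{Id}$ rather than a two-sided bound on the Hessian. Fix $f\in L^2(\pi)$; subtracting its mean we may assume $\int f\,d\pi=0$, so that $\var_\pi(P_t^*f)=\|P_t^*f\|^2$, where $\|\cdot\|$ is the $L^2(\pi)$ norm. Set $g_t=P_t^*f$, so that $\partial_t g_t=\mathcal L^*g_t$ and the bare energy identity reads
\[
\frac{d}{dt}\|g_t\|^2=2\beta\,\langle\mathcal L_{\rm OU}g_t,g_t\rangle=-2\beta\int|\nabla_y g_t|^2\,d\pi,
\]
so there is dissipation, but only in the velocity variable — the coercivity defect. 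Let $\Pi g(x)=\int g(x,y)\,d\gamma(y)$ be the conditional expectation onto functions of the position only, let $B=\mathcal L_{\rm Ham}\Pi$, and set $A=(\mathrm{Id}+B^*B)^{-1}B^*$; one has $\|A\|\leq\tfrac12$ for free. The Lyapunov functional will be $\mathcal E(g)=\|g\|^2+2\epsilon\langle Ag,g\rangle$ with $\epsilon\in(0,1/3]$ to be fixed. Since $|\langle Ag,g\rangle|\leq\tfrac12\|g\|^2$ we have $(1-\epsilon)\|g\|^2\leq\mathcal E(g)\leq(1+\epsilon)\|g\|^2$, so once we have shown $\frac{d}{dt}\mathcal E(g_t)\leq-\lambda\,\mathcal E(g_t)$ for a suitable $\lambda>0$, Gronwall's lemma gives $\|g_t\|^2\leq\frac{1+\epsilon}{1-\epsilon}\e^{-\lambda t}\|f\|^2\leq 2\e^{-\lambda t}\|f\|^2$, which is the theorem. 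Note that $\mathcal E$ involves $g$ only and not its derivatives, so no parabolic smoothing of the (possibly rough, non mean-zero) initial datum is needed: the computations below are done on a dense class and then closed by density.

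Differentiating, $\frac{d}{dt}\mathcal E(g_t)=-2\beta\int|\nabla_y g_t|^2\,d\pi+2\epsilon\langle(A\mathcal L^*+(\mathcal L^*)^*A^*)g_t,g_t\rangle$, and one expands the last bracket using $\mathcal L^*=\beta\mathcal L_{\rm OU}-\mathcal L_{\rm Ham}$ together with $\mathcal L_{\rm Ham}^*=-\mathcal L_{\rm Ham}$. The algebraic core is the identity
\[
B^*B=(\mathcal L_{\rm Ham}\Pi)^*(\mathcal L_{\rm Ham}\Pi)=-\mathcal L_\mu\,\Pi,
\]
where $\mathcal L_\mu=\Delta_x-\nabla V\cdot\nabla_x$ is the overdamped generator of $\mu$; this is a one-line computation from $\int y\otimes y\,d\gamma=\mathrm{Id}$ and an integration by parts in $x$. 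It follows that on mean-zero functions $AB=\mathrm{Id}-(\mathrm{Id}-\mathcal L_\mu)^{-1}$, and the Poincar\'e inequality for $\mu$ — which says exactly that $-\mathcal L_\mu\geq C_P^{-1}$ on mean-zero functions — yields $\langle AB\,\Pi g,\Pi g\rangle\geq\frac1{1+C_P}\|\Pi g\|^2$: this is the macroscopic coercivity. The microscopic coercivity is the Gaussian Poincar\'e inequality $\int|\nabla_y g|^2\,d\pi\geq\|g-\Pi g\|^2$, which exploits the bare term. Modulo the cross terms discussed next, combining the two inputs and optimising $\epsilon$ produces $\frac{d}{dt}\mathcal E(g_t)\leq-\lambda\,\mathcal E(g_t)$ with $\lambda$ of order $\beta/(1+(\beta^2+\kappa)C_P)$.

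The one genuinely delicate step — and the reason for reproving the estimate rather than quoting~\cite{cao_lu_wang} — is the control of the cross terms coming from $A\,\mathcal L_{\rm Ham}(\mathrm{Id}-\Pi)$ and its adjoint. Expanding $\mathcal L_{\rm Ham}$ on a velocity-dependent function produces the term $-\nabla V\cdot\nabla_y g$, and applying $B^*$ together with a Gaussian integration by parts in $y$ turns this into a function of $x$ of the schematic form $\langle\nabla^2 V,\mathcal R\rangle_{\rm HS}$ plus terms involving $\nabla V$ linearly and quadratically, where $\mathcal R(x)=\int(y\otimes y-\mathrm{Id})(g-\Pi g)(x,\cdot)\,d\gamma$ is a matrix-valued average of $g$. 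Thus the a priori unbounded Hessian (and gradient) of $V$ intervene, and the naive bound fails under semi-convexity alone. The remedy is to write $\nabla^2 V=H-\kappa\,\mathrm{Id}$ with $H=\nabla^2 V+\kappa\,\mathrm{Id}\geq0$, to dispose of the $-\kappa\,\mathrm{Id}$ part trivially, and to estimate the $H$-part by the Cauchy--Schwarz inequality for the positive semidefinite form $H$, arranging — and this is where the resolvent $(\mathrm{Id}-\mathcal L_\mu)^{-1}$ built into $A$ is used — that the dangerous factor is always a spatial gradient $\nabla_x\psi$ of a function $\psi$ of $x$ alone. For such $\psi$, Bochner's identity $\int(\mathcal L_\mu\psi)^2\,d\mu=\int\|\nabla_x^2\psi\|^2\,d\mu+\int\langle\nabla^2 V\,\nabla_x\psi,\nabla_x\psi\rangle\,d\mu$ gives $\int\langle H\,\nabla_x\psi,\nabla_x\psi\rangle\,d\mu\leq\int(\mathcal L_\mu\psi)^2\,d\mu+\kappa\int|\nabla_x\psi|^2\,d\mu$, and since $\psi$ is produced by the resolvent, the right-hand side is bounded by $\lesssim(1+\kappa C_P)$ times a norm of the data, the extra $C_P$ coming again from Poincar\'e for $\mu$. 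This is precisely what puts the factor $1+(\beta^2+\kappa)C_P$ in the denominator of the rate, the $\beta^2 C_P$ contribution appearing analogously from the $A\,\mathcal L_{\rm OU}$ cross terms.

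I expect this Hessian estimate, together with the bookkeeping needed to absorb it and the companion cross terms simultaneously into the good terms for one common small value of $\epsilon$ (and the explicit determination of that value and of $\lambda$), to be the bulk of the work. The remaining steps — the Gronwall argument, unwinding the reduction to $\var_\pi(P_t^*f)$, and, for the constant-free version announced after Theorem~\ref{thm_hypo}, tracking every constant through the proof — are routine.
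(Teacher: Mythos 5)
Your proposal is correct in outline but takes a genuinely different route from the paper. The paper never builds a modified Lyapunov functional: it proves a space--time Poincar\'e inequality,
\[
\int_0^T \var_\pi(P_t^*f)\,dt \;\lesssim\; \bigl(T^2+T^{-2}+\beta^2+\kappa\bigr)\int_0^T\!\!\int \vert\nabla_y P_t^*f\vert^2\,d\pi\,dt ,
\]
by solving the divergence equation $\partial_t u+L_\mu v=Mf$ with Dirichlet conditions in time (spectral theory for the commuting operators $-\partial_t^2$ and $-L_\mu$ in the sine basis, plus a constrained quadratic optimization), and then gets exponential decay by iterating the resulting bound over time windows; this is the Cao--Lu--Wang argument, so your attribution of the modified-entropy scheme to that paper is slightly off --- what you propose is the Dolbeault--Mouhot--Schmeiser functional $\mathcal E(g)=\Vert g\Vert^2+2\epsilon\langle Ag,g\rangle$ with $A=(\id+B^*B)^{-1}B^*$, $B=\mathcal L_{\rm Ham}\Pi$. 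Both proofs hinge on the same analytic input, namely the Bochner-based estimate $\int\Vert\nabla^2\psi\Vert_F^2\,d\mu\le(1+\kappa C_P)\int (L_\mu\psi)^2\,d\mu$ (Corollary~\ref{cor_bochner}), which in your setting is exactly what bounds $A\mathcal L_{\rm Ham}(\id-\Pi)$. For that step I recommend working with the adjoint: $A^*w=\mathcal L_{\rm Ham}\psi$ with $\psi=(\id-L_\mu)^{-1}\Pi w$, so $(\id-\Pi)\mathcal L_{\rm Ham}A^*w=\langle\nabla^2\psi\,y,y\rangle-\Delta\psi$ (the $\nabla V\cdot\nabla\psi$ term is a function of $x$ and is killed by $\id-\Pi$), and the Gaussian variance of a quadratic form plus Bochner give $\Vert A\mathcal L_{\rm Ham}(\id-\Pi)\Vert\lesssim\sqrt{1+\kappa C_P}$ with no stray $\nabla V$ terms at all --- under semi-convexity alone you could not control the linear and quadratic $\nabla V$ contributions your sketch mentions, so the cancellation is not optional. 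Your route buys a purely differential Gronwall argument with prefactor $\frac{1+\epsilon}{1-\epsilon}\le 2$; the paper's buys the space--time Poincar\'e inequality itself and avoids constructing the auxiliary operator.

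One quantitative point needs fixing. With your normalization of $A$, macroscopic coercivity only gives $\langle AB\,\Pi g,\Pi g\rangle\ge\frac1{1+C_P}\Vert\Pi g\Vert^2$, and carrying out the $\epsilon$-optimization then produces a rate of order $\beta\,(1+C_P)^{-2}\bigl(\beta+\sqrt{1+\kappa C_P}\bigr)^{-2}$, which agrees with the target only when $C_P\asymp1$; for $\kappa=0$ and fixed $\beta$, say, it decays like $C_P^{-2}$ rather than the claimed $C_P^{-1}$. So "optimising $\epsilon$ produces $\lambda\asymp\beta/(1+(\beta^2+\kappa)C_P)$" is not true as stated for your choice of $A$. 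The cure is the scaling reduction the paper records before its proof: replace $(X_t,Y_t)$ by $(\lambda^{-1}X_{\lambda t},Y_{\lambda t})$ with $\lambda=\sqrt{C_P}$, which sets $C_P=1$ while sending $\beta\mapsto\beta\sqrt{C_P}$, $\kappa\mapsto\kappa C_P$, and then your computation does give $\lambda\asymp\beta/(1+\beta^2+\kappa)$, which un-rescales to the stated rate. With that reduction in place (and the deferred bookkeeping for $\epsilon$, which is routine), your plan is a legitimate alternative proof of Theorem~\ref{thm_hypo2}.
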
 
As we mentioned in the introduction the difficulty arises 
from the degeneracy of the diffusion. 
Let us say a few more words about this. By definition 
\[ 
\partial_t P_t^* f = \mathcal L^* P_t^* f . 
\] 
Integrating by parts we see that
\[ 
\frac{d}{dt} \var_\pi (P_t^* f) 
= 2 \int_{\R^{2n}} (\mathcal L^* P_t^* f) P_t^* f \, d\pi 
= - 2\beta  \int_{\R^{2n}} \vert \nabla_y P_t^* f \vert^2 \, d\pi.  
\] 
This shows that the variance of $P_t^* f$ is 
non-increasing in time. 
If we want a more quantitative statement, 
the issue is that only the gradient in $y$ appears in the 
the dissipation of the variance, and not the full gradient.
This comes from the fact that the associated diffusion is degenerate and
that the Brownian term only appears in the $y$ variable. 
Thus we cannot hope to lower bound the dissipation of variance 
by the variance itself. There are a number of 
ways around this issue, and this line of research usually 
goes by the name of \emph{hypocoercivity}. This was 
pioneered by Villani~\cite{villani}, other classical references 
include~\cite{DMS,baudoin} to name only a very few of them. 
In Villani's work the main idea is 
to consider the dissipation of some perturbed energy, of the form 
\[ 
\mathcal E (f_t ) := \var_\pi ( f_t ) 
+ \int_{\R^n \times\R^n} \langle A \nabla f_t , \nabla f_t \rangle \, d \pi 
\]
where $A$ is a suitably chosen positive semi-definite matrix. This is 
also the approach taken by many of the subsequent works, including 
the works having application to sampling~\cite{dal_riou,cheng_et_al} 
which we already mentioned. 
To arrive at Theorem~\ref{thm_hypo2} 
we take a slightly different route, which is
inspired by the work of Albritton, Armstrong, Mourrat and Novack~\cite{AAMN}. The main idea is that while 
an inequality of the form $\var_\pi (f) \leq C \int \vert \nabla_y f\vert^2 \, d\pi$ is impossible, if we integrate this on some 
time interval along the kinetic Langevin diffusion, then the inequality 
becomes plausible. This is called \emph{space-time} Poincar\'e inequality. 
This approach is quite general and not restricted 
to the case of the kinetic Langevin diffusion. However the results from~\cite{AAMN}  
are mostly qualitative, whereas here we need quantitative estimates with explicit dependence on all parameters of the problem. A quantitative version of~\cite{AAMN} was developed by Cao, Lu and Wang in~\cite{cao_lu_wang} and the 
proof spelled out below is very much inspired by their argument. 

Before embarking for the proof of Theorem~\ref{thm_hypo2} let us 
remark that it is invariant by scaling. Indeed, notice that if $(X_t,Y_t)$ 
is a kinetic Langevin diffusion associated to $\mu$ 
and with friction parameter $\beta$, then 
$(\frac1\lambda X_{\lambda t} , Y_{\lambda t})$ 
is a kinetic Langevin diffusion associated to
the measure  $\mu$ scaled by $1/\lambda$ (i.e. the law of $X/\lambda$ if
$X$ is a vector with law $\mu$) and friction parameter $\lambda \beta$. This implies easily that if we define $\alpha = \alpha (t,\beta, \mu)$
to be the best estimate one could get for $\mu$ in this theorem, 
namely 
\[ 
\alpha (t,\beta,\mu) = \sup \left\{ \frac{ \var_\pi (P^*_tf) }{\var_\pi (f) } \right\} , 
\] 
where the supremum is taken over every function $f\in L^2(\pi)$, 
then $\alpha$ has the property that 
\[ 
\alpha ( t , \beta, \mu ) = \alpha ( t / \lambda , \beta \lambda, \mu_{1/\lambda} ) , 
\]
where $\mu_{1/\lambda}$ denotes the measure 
$\mu$ scaled by $1/\lambda$. The upper bound for $\alpha$ 
provided by Theorem~\ref{thm_hypo2} is also invariant by this
change of variable. As a result it is enough to prove the result  
when one of the parameters has a prescribed value, we will exploit 
this observation later on.    

\subsection{The $L^2$ method for Poincar\'e} 
In this section we gather some well known facts about the Laplace 
operator associated to some measure that will be needed later on. 
Let $\mu$ be a probability measure on $\R^n$, of 
the form 
\[ 
\mu (dx)= \e^{ - V(x) } \, dx 
\] 
for some $\mathcal C^2$-smooth function $V\colon \R^n \to \R$. 
The Laplace operator associated to $\mu$ 
is the differential operator $L_\mu$ defined by 
\[ 
L_\mu f = \Delta f - \langle \nabla f , \nabla V \rangle  . 
\] 
Originally $L_\mu$ is defined on the space of $\mathcal C^\infty$-smooth 
and compactly supported functions $f$, in which case 
an integration by parts gives 
\[ 
\int_{\R^n} (L_\mu f) g  \, d\mu = 
- \int_{\R^n} \langle \nabla f ,\nabla g \rangle \,d\mu  .  
\]
This shows in particular $\langle L_\mu f , g \rangle_{L^2(\mu)} 
= \langle f, L_\mu g\rangle_{L^2(\mu)}$ and that 
$\langle L_\mu f , f \rangle_{L^2(\mu)}\leq 0$ 
for any functions $f,g$ in the domain of $L_\mu$. 
In the language of operator theory, the operator $-L_\mu$ 
is said to be symmetric and monotone. 
It turns out that $L_\mu$ admits a unique self-adjoint extension, 
the domain of which contains $H^1(\mu)$, and that the 
above integration by parts is true for every $f,g$ in $H^1(\mu)$.
Recall that $H^1(\mu)$ is the space of functions $f\in L^2 (\mu)$ 
whose weak gradient also belongs to $L^2 (\mu)$. The operator $L_\mu$ 
admits $0$ as a simple a eigenvalue, the corresponding eigenspace 
consists of constant functions. We say that $L_\mu$, or 
rather $-L_\mu$, has a spectral gap if there exists $\lambda_0>0$ 
such that the rest of the spectrum of $-L_\mu$ is included in some 
interval $[\lambda_0;+\infty)$. This property turns out to be equivalent 
to the Poincar\'e inequality, as we shall see now. 
\begin{lemma} \label{lemma_bochner1}
The measure $\mu$ satisfies Poincar\'e with constant $C_P$ 
if and only if the spectral gap of the operator $L_\mu$
is at least $C_P^{-1}$. Moreover this is also equivalent to 
the inequality
\[ 
\int_{\R^n} \vert \nabla f \vert^2 \, d\mu 
\leq C_P \cdot \int_{\R^n} (L_\mu f)^2 \, d\mu ,  
\] 
for every $f$ in the domain of $L_\mu$. 
\end{lemma}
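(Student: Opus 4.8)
The plan is to deduce all three equivalences from the spectral theorem applied to $-L_\mu$. The starting point is the integration by parts formula recalled above: for $f$ in the domain of $L_\mu$ one has $\int_{\R^n} |\nabla f|^2 \, d\mu = -\int_{\R^n} (L_\mu f)\, f \, d\mu$, and rewritten as $\int_{\R^n} |\nabla f|^2\,d\mu = \|(-L_\mu)^{1/2} f\|_{L^2(\mu)}^2$ this identity holds for every $f$ in the form domain of $-L_\mu$, which is $H^1(\mu)$. A routine density argument (mollification and truncation) shows moreover that the Poincar\'e inequality for locally Lipschitz functions is equivalent to the same inequality for every $f\in H^1(\mu)$. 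Since $\var_\mu$, $\nabla$ and $L_\mu$ are all insensitive to adding a constant and $0$ is a simple eigenvalue of $L_\mu$ with the constants as eigenspace, I would from the outset restrict attention to the closed subspace $H := \{ g\in L^2(\mu): \int_{\R^n} g\,d\mu = 0\}$; on $H$ the (self-adjoint extension of the) operator $-L_\mu$ acts as a nonnegative self-adjoint operator with trivial kernel.

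Let $(E_\lambda)$ be the spectral resolution of $-L_\mu$ on $H$, and for $f\in H$ set $d\mu_f(\lambda) := d\langle E_\lambda f, f\rangle_{L^2(\mu)}$, a finite positive measure carried by $(0,\infty)$ (no atom at $0$, since the kernel is trivial). Spectral calculus then gives
\[ \var_\mu(f) = \int_0^\infty d\mu_f, \qquad \int_{\R^n} |\nabla f|^2\,d\mu = \int_0^\infty \lambda \, d\mu_f, \qquad \int_{\R^n}(L_\mu f)^2\,d\mu = \int_0^\infty \lambda^2 \, d\mu_f, \]
the first two being valid for $f\in H\cap H^1(\mu)$ and the last for mean-zero $f$ in the domain of $L_\mu$ (equivalently $\int_0^\infty \lambda^2\,d\mu_f<\infty$). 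In these terms the Poincar\'e inequality with constant $C_P$ reads $\int d\mu_f \leq C_P \int \lambda\,d\mu_f$ for all admissible $f$, while the third inequality of the lemma reads $\int \lambda\,d\mu_f \leq C_P\int \lambda^2\,d\mu_f$ for all admissible $f$. I would then show that each of these two statements is equivalent to the condition that $\mu_f$ is supported in $[C_P^{-1},\infty)$ for every $f\in H$, i.e. that the spectrum of $-L_\mu$ on $H$ is contained in $[C_P^{-1},\infty)$, which is precisely the statement that the spectral gap of $-L_\mu$ is at least $C_P^{-1}$.

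The direction ``spectral gap implies both inequalities'' is immediate: if the spectrum lies in $[C_P^{-1},\infty)$ then so does the support of each $\mu_f$, and there the elementary pointwise bounds $1\leq C_P\lambda$ and $\lambda\leq C_P\lambda^2$ integrate against $d\mu_f$ to yield the Poincar\'e inequality and the third inequality respectively. For the converse I would argue by contraposition: suppose some $\lambda'$ in the spectrum of $-L_\mu|_H$ satisfies $0<\lambda'<C_P^{-1}$; then for $\delta>0$ small the spectral projection $E([\lambda'-\delta,\lambda'+\delta])$ is nonzero, so one may pick a unit vector $f$ in its range. This $f$ has zero mean, lies in the domain of $L_\mu$ (bounded spectral support), and $\mu_f$ is a probability measure carried by $I := [\lambda'-\delta,\lambda'+\delta]$, so $\int d\mu_f = 1$, $\int\lambda\,d\mu_f\in I$, and $\int\lambda^2\,d\mu_f\in[(\lambda'-\delta)^2,(\lambda'+\delta)^2]$. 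Choosing $\delta$ small enough that $\lambda'+\delta<C_P^{-1}$ gives $\int d\mu_f = 1 > C_P(\lambda'+\delta)\geq C_P\int\lambda\,d\mu_f$, contradicting Poincar\'e; and since $C_P(\lambda')^2<\lambda'$ one can also take $\delta$ small enough that $C_P(\lambda'+\delta)^2<\lambda'-\delta$, which gives $\int\lambda\,d\mu_f\geq\lambda'-\delta>C_P(\lambda'+\delta)^2\geq C_P\int\lambda^2\,d\mu_f$, contradicting the third inequality. This establishes both equivalences and hence the lemma. The computations are all routine; the only points needing some care are the domain bookkeeping for $L_\mu$ and $(-L_\mu)^{1/2}$ together with the density argument for the Poincar\'e inequality, while the one genuinely constructive ingredient --- producing approximate eigenfunctions with spectral mass concentrated at an offending point of the spectrum --- is a standard consequence of the spectral theorem.
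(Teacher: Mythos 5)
The paper does not prove Lemma~\ref{lemma_bochner1}: it records it as a well-known fact and explicitly defers to the references (Lichnerowicz, and the works \cite{CEFM,klartag_berry}) for a proof. So there is no in-paper argument to compare against. Your proof itself is correct and is the standard spectral-theoretic argument. Restricting $-L_\mu$ to the mean-zero subspace $H$, using the identities $\var_\mu(f)=\int d\mu_f$, $\int|\nabla f|^2\,d\mu=\int\lambda\,d\mu_f$, $\int(L_\mu f)^2\,d\mu=\int\lambda^2\,d\mu_f$, and then comparing the pointwise bounds $1\le C_P\lambda$ and $\lambda\le C_P\lambda^2$ on the putative spectral support, with the contrapositive via approximate eigenfunctions from nonzero spectral projections, is exactly how one would expect this to be established. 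It is also entirely consistent with the paper's own use of the spectral theorem for $B=-L_\mu$ in the proof of Lemma~\ref{lem_step789}, where the spectral resolution $(E_\lambda)$ supported on $[1,\infty)$ is invoked for the same reason. The two small technical points you flag (the density extension of the Poincar\'e inequality from Lipschitz functions to $H^1(\mu)$, and the domain bookkeeping for $(-L_\mu)^{1/2}$) are the right ones to flag, and both are handled adequately. In short: correct proof, no gap, filling in an argument the paper omits.
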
 
\begin{lemma}[Bochner formula]\label{lemma_boc2}
For every smooth and compactly supported 
function $f$ we have 
\[ 
\int_{\R^n} (L_\mu f)^2 \,d \mu 
= \int_{\R^n} \Vert \nabla^2 f \Vert^2_{F} + \nabla^2 V ( \nabla f, \nabla f) \, d\mu.  
\]
\end{lemma}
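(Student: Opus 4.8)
The plan is to derive the identity from the integration-by-parts formula for $L_\mu$ recalled just above the lemma, together with the elementary commutation relation between $L_\mu$ and the gradient. Since $f$ is smooth and compactly supported, so is each partial derivative $\partial_i f$, so all the integrations by parts performed below are legitimate and produce no boundary terms.

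First I would write $\int_{\R^n} (L_\mu f)^2 \, d\mu = \int_{\R^n} (L_\mu f)(L_\mu f)\, d\mu$ and apply the integration-by-parts formula with $g = L_\mu f$, which gives
\[
\int_{\R^n} (L_\mu f)^2 \, d\mu = - \int_{\R^n} \langle \nabla f , \nabla (L_\mu f) \rangle \, d\mu .
\]
Next I would compute $\nabla (L_\mu f)$ coordinatewise. Differentiating $L_\mu f = \Delta f - \langle \nabla f , \nabla V \rangle$ in the direction $e_i$ and using that partial derivatives commute, one obtains
\[
\partial_i (L_\mu f) = L_\mu (\partial_i f) - ( \nabla^2 V \, \nabla f )_i ,
\]
that is, $\nabla (L_\mu f) = L_\mu (\nabla f) - \nabla^2 V \, \nabla f$, where $L_\mu$ is understood to act on the vector field $\nabla f$ componentwise.

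Substituting this back, the right-hand side splits into two pieces. The piece coming from $\nabla^2 V \, \nabla f$ is immediately $\int_{\R^n} \nabla^2 V (\nabla f, \nabla f)\, d\mu$. For the remaining piece, $-\int_{\R^n} \langle \nabla f , L_\mu (\nabla f) \rangle \, d\mu = -\sum_i \int_{\R^n} (\partial_i f)\, L_\mu (\partial_i f)\, d\mu$, I would apply the integration-by-parts formula once more, this time to each $\partial_i f$ (again smooth and compactly supported), getting $\sum_i \int_{\R^n} \vert \nabla \partial_i f \vert^2 \, d\mu = \int_{\R^n} \Vert \nabla^2 f \Vert_F^2 \, d\mu$. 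Adding the two contributions yields the stated formula.

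I do not expect any real obstacle here; the only points needing a little care are the bookkeeping in the commutation identity $\nabla L_\mu f = L_\mu \nabla f - \nabla^2 V \, \nabla f$ and checking that each integration by parts is valid, which holds because of the compact support assumption. A brute-force alternative — expanding $(L_\mu f)^2 = (\Delta f)^2 - 2 (\Delta f) \langle \nabla f, \nabla V\rangle + \langle \nabla f, \nabla V\rangle^2$ and integrating each term against $\e^{-V}\, dx$ — also works but is messier, so I would favour the commutation approach above.
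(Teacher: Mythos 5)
Your argument is correct. Note that the paper does not actually prove this lemma: it is stated as a known fact and the reader is referred to the literature (Lichnerowicz and the more recent references cited just after the two lemmas). Your derivation — integrating by parts once to get $-\int \langle \nabla f, \nabla (L_\mu f)\rangle\,d\mu$, using the commutation identity $\nabla (L_\mu f) = L_\mu(\nabla f) - \nabla^2 V\,\nabla f$, and integrating by parts componentwise to produce $\int \Vert \nabla^2 f\Vert_F^2\,d\mu$ — is precisely the standard proof given in those references, and all the integrations by parts are justified as you say because $f$ and each $\partial_i f$ are smooth and compactly supported (one may remark that since $V$ is only assumed $\mathcal C^2$, the function $L_\mu f$ is merely $\mathcal C^1$ with compact support, which is still amply sufficient for the first integration by parts).
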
 
Here and in the sequel 
\[ 
\Vert A\Vert_{F} = \left( \sum_{i,j} a_{ij}^2 \right)^{1/2} 
\] 
denotes the Frobenius norm of a symmetric matrix $A= (a_{ij})$. 
If we combine together these two lemmas
we easily get the so-called Lichnerowicz estimates: 
If the potential $V$ of $\mu$ is uniformly convex, in the sense that
there exists a constant $\alpha >0$ such that $\nabla^2 V \geq \alpha \cdot \id$ 
pointwise and for the order given by the cone of positive 
semi-definite matrices, then the measure $\mu$ satisfies 
Poincar\'e with constant $1/\alpha$. 
Indeed Bochner's formula then implies 
\[ 
\int_{\R^n} (L_\mu f)^2 \,d \mu 
\geq \alpha \int_{\R^n} \vert \nabla f \vert \, d\mu 
\] 
which yields the claimed bound on the Poincar\'e constant thanks to 
Lemma~\ref{lemma_bochner1}. 

This approach for the Poincar\'e inequality 
dates back to the work of Lichnerowicz~\cite{lichnerowicz}. 
More recent references where this method plays a role include~\cite{CEFM,klartag_berry} among others. 
We refer to these for the proofs 
of Lemmas~\ref{lemma_bochner1}
and~\ref{lemma_boc2}. 
For our purposes the following immediate 
consequence of the two lemmas will be important. 
\begin{corollary} \label{cor_bochner} 
If $\mu$ satisfies Poincar\'e with constant $C_P$ 
and if the potential $V$ of $\mu$ is $\mathcal C^2$-smooth 
and semi-convex with constant $\kappa$, 
then for every $f$ in the domain of $L_\mu$ we have 
\[ 
\int_{\R^n} \Vert \nabla^2 f \Vert^2_{F} \, d\mu \leq 
\left( 1 + \kappa \cdot C_P \right) \int_{\R^n} ( L_\mu f)^2 \, d\mu . 
\]
\end{corollary}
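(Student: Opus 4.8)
The plan is to combine the two preceding lemmas in the most direct way possible. Starting from Bochner's formula (Lemma~\ref{lemma_boc2}),
\[
\int_{\R^n} (L_\mu f)^2 \, d\mu = \int_{\R^n} \Vert \nabla^2 f \Vert_F^2 \, d\mu + \int_{\R^n} \nabla^2 V(\nabla f, \nabla f) \, d\mu ,
\]
I would rearrange to isolate the Hessian term,
\[
\int_{\R^n} \Vert \nabla^2 f \Vert_F^2 \, d\mu = \int_{\R^n} (L_\mu f)^2 \, d\mu - \int_{\R^n} \nabla^2 V(\nabla f, \nabla f) \, d\mu .
\]
The semi-convexity hypothesis says $\nabla^2 V \geq -\kappa \,\id$ pointwise, so $\nabla^2 V(\nabla f, \nabla f) \geq -\kappa \vert \nabla f\vert^2$, whence $-\int \nabla^2 V(\nabla f,\nabla f)\, d\mu \leq \kappa \int \vert \nabla f\vert^2 \, d\mu$. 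Plugging this in gives
\[
\int_{\R^n} \Vert \nabla^2 f \Vert_F^2 \, d\mu \leq \int_{\R^n} (L_\mu f)^2 \, d\mu + \kappa \int_{\R^n} \vert \nabla f \vert^2 \, d\mu .
\]

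The second ingredient is the ``reverse'' Poincar\'e inequality from Lemma~\ref{lemma_bochner1}, namely $\int \vert \nabla f\vert^2 \, d\mu \leq C_P \int (L_\mu f)^2 \, d\mu$, valid for all $f$ in the domain of $L_\mu$. Substituting this bound for $\int \vert\nabla f\vert^2\, d\mu$ into the previous display immediately yields
\[
\int_{\R^n} \Vert \nabla^2 f \Vert_F^2 \, d\mu \leq (1 + \kappa C_P) \int_{\R^n} (L_\mu f)^2 \, d\mu ,
\]
which is exactly the claimed estimate.

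The only genuine subtlety is the domain/density question: Bochner's formula as stated in Lemma~\ref{lemma_boc2} is for smooth compactly supported $f$, whereas the corollary is asserted for all $f$ in the domain of $L_\mu$. I would handle this by a standard approximation argument --- smooth compactly supported functions are a core for the self-adjoint extension of $L_\mu$, so given $f$ in the domain one picks $f_k$ smooth and compactly supported with $f_k \to f$ in $L^2(\mu)$ and $L_\mu f_k \to L_\mu f$ in $L^2(\mu)$; the reverse Poincar\'e inequality then forces $\nabla f_k \to \nabla f$ in $L^2(\mu)$, and applying the inequality (already proven for each $f_k$) and passing to the limit gives the result in general, using lower semicontinuity of $f \mapsto \int \Vert \nabla^2 f\Vert_F^2\, d\mu$ (or controlling $\nabla^2 f_k$ via the same inequality applied to differences). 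This density step is the main obstacle in the sense that it is the only place where care is required; the algebraic core of the proof is a two-line manipulation. Since the excerpt explicitly defers to~\cite{CEFM,klartag_berry} for the analytic underpinnings of Lemmas~\ref{lemma_bochner1} and~\ref{lemma_boc2}, I expect the paper to treat this extension as routine and simply state the corollary as an ``immediate consequence,'' which is indeed what the wording preceding it suggests.
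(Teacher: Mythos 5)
Your argument is exactly the paper's: apply Bochner's formula with the pointwise bound $\nabla^2 V \geq -\kappa\,\id$ to get $\int (L_\mu f)^2\,d\mu \geq \int \Vert\nabla^2 f\Vert_F^2\,d\mu - \kappa\int\vert\nabla f\vert^2\,d\mu$, then absorb the gradient term via the reverse Poincar\'e inequality from Lemma~\ref{lemma_bochner1}. The paper leaves the density/approximation step implicit, as you anticipated, so your remarks on that point are a harmless elaboration rather than a divergence.
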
 
\begin{proof} 
Bochner's formula and the semi-convexity hypothesis yield 
\[ 
\int_{\R^n} (L_\mu f)^2 \, d\mu 
\geq \int_{\R^n} \Vert \nabla^2 f\Vert_F^2 \, d\mu - \kappa  
\int_{\R^n} \vert \nabla f\vert^2 \, d\mu . 
\]
By Lemma~\ref{lemma_bochner1}
\[ 
\int_{\R^n} \vert \nabla f\vert^2 \, d\mu \leq C_P \int_{\R^n} (L_\mu f)^2 \, d\mu ,
\] 
and the result follows. 
\end{proof}

\subsection{A divergence equation} 
The next proposition, taken 
from the Cao, Lu, Wang paper, 
is the main ingredient in the proof of the 
hypocoercive estimate. This proposition 
is revisited and extended to a somewhat more general context 
in the preprint~\cite{guillin}. However, instead 
of just taking the result for granted 
we shall reprove it. One reason is that in the aforementioned 
works the proposition is proven under annoying additional 
technical assumptions which are actually not needed. 
Maybe more importantly the existing proofs are mostly computational 
and we find them hard to follow. Our proof relies on spectral theory and is arguably more conceptual. This hopefully sheds new light on this somewhat delicate 
proposition. 
\begin{proposition}\label{prop_key} 
Suppose that the potential of $\mu$ is $\mathcal C^2$-smooth and semi-convex, with constant $\kappa$, 
and that $\mu$ satisfies Poincar\'e with constant $1$. 
Fix a time horizon $T$ and let 
\[ f \in L^2 ( [0,T] \times \R^n , \lambda \otimes \mu) \]
where $\lambda$ is the Lebesgue measure, and 
assume that $f$ is orthogonal to constants. 
Then there exist $u$ and $v$ in $L^2 ( \lambda \otimes \mu)$ 
satisfying the Dirichlet boundary conditions, namely $u(0,\cdot) = u(T,\cdot)= 0$, and similarly for $v$, such that
\begin{equation}
\label{eq_laplace}
\partial_t u + L_\mu v = f  , 
\end{equation} 
and such that the  
following estimates hold true: 
\begin{enumerate}[i)]
\item $\Vert \nabla u \Vert_{L^2(\lambda\otimes \mu)} \lesssim T \Vert f \Vert_{L^2(\lambda\otimes \mu)}$,
\item $\Vert \nabla \partial_t v \Vert_{L^2(\lambda\otimes \mu)} \lesssim T^{-1} \Vert f\Vert_{L^2(\lambda\otimes \mu)}$,
\item $\Vert \nabla v \Vert_{L^2(\lambda\otimes \mu)} \lesssim \Vert f\Vert_{L^2(\lambda\otimes \mu)}$,
\item $\Vert \nabla^2 v \Vert_{L^2(\lambda\otimes \mu)} \lesssim (1+\kappa)^{1/2} \Vert f\Vert_{L^2(\lambda\otimes \mu)}$.   
\end{enumerate} 
\end{proposition}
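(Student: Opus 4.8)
The plan is to diagonalise $L_\mu$ in the space variable by the spectral theorem and thereby decouple \eqref{eq_laplace} into a one--parameter family of scalar problems on $[0,T]$. Since $\mu$ satisfies Poincar\'e with constant $1$, Lemma~\ref{lemma_bochner1} gives that the spectrum of $-L_\mu$ on $L^2(\mu)$ is contained in $\{0\}\cup[1,+\infty)$; writing $f$ through the corresponding (direct integral) spectral decomposition of $L^2(\mu)$, $f$ corresponds to a family $(f_\ell)$ with $f_\ell\in L^2([0,T])$, and \eqref{eq_laplace} becomes, for each spectral value $\ell$, the scalar equation $\partial_t u_\ell-\ell\,v_\ell=f_\ell$ on $[0,T]$ with $u_\ell,v_\ell\in H^1_0([0,T])$. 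The $\ell=0$ part is trivial: it involves only $u_0$, which depends on $t$ alone and so contributes nothing to any $x$--gradient, and it is solvable precisely because $f$ is orthogonal to constants, i.e.\ $\int_0^T f_0=0$. Using the standard identities relating $\int|\nabla_x g|^2\,d\mu$ and $\int(L_\mu g)^2\,d\mu$ to the spectral decomposition of $g$, together with Corollary~\ref{cor_bochner} (with $C_P=1$) for the term $\|\nabla^2 v\|$, the four estimates i)--iv) reduce to the following scalar bounds, to hold for every $\ell\ge 1$ with universal constants (all norms in $L^2([0,T])$):
\[
\ell\,\|u_\ell\|^2\lesssim T^2\|f_\ell\|^2,\qquad \ell\,\|\partial_t v_\ell\|^2\lesssim T^{-2}\|f_\ell\|^2,\qquad \ell^2\,\|v_\ell\|^2\lesssim\|f_\ell\|^2 .
\]
Note that iii) is then automatic from the third bound since $\ell\ge1$, and iv) follows from it via Corollary~\ref{cor_bochner}.

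Now fix $\ell\ge1$, set the time--frequency cut--off $K:=\lceil\sqrt\ell\,\rceil$, and expand $f_\ell$ in the cosine orthonormal basis $\psi_k(t)=\sqrt{2/T}\cos(k\pi t/T)$ of $L^2([0,T])$, splitting $f_\ell=f_\ell^{\mathrm{hi}}+f_\ell^{\mathrm{lo}}$ into the modes $k>K$ and the modes $0\le k\le K$. The high part has zero mean, hence equals $\partial_t u_\ell^{\mathrm{hi}}$ for the unique $u_\ell^{\mathrm{hi}}=\sum_{k>K}\tfrac{T}{k\pi}\langle f_\ell,\psi_k\rangle\,\phi_k\in H^1_0([0,T])$, where $\phi_k(t)=\sqrt{2/T}\sin(k\pi t/T)$ (so that $\phi_k'=(k\pi/T)\psi_k$); its size $\|u_\ell^{\mathrm{hi}}\|^2\le(T/K\pi)^2\|f_\ell^{\mathrm{hi}}\|^2\lesssim \ell^{-1}T^2\|f_\ell\|^2$ is immediate. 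For the low part one sets $v_\ell:=-\tfrac1\ell\bigl(f_\ell^{\mathrm{lo}}-r_\ell\bigr)$, where $r_\ell$ is a correction forcing $v_\ell$ to vanish at $0$ and $T$; the remaining part of $u$ is then $u_\ell^{\mathrm{lo}}:=\int_0^\cdot r_\ell$, and $\partial_t u_\ell^{\mathrm{lo}}-\ell v_\ell=f_\ell^{\mathrm{lo}}$ holds by construction. Using $\|\partial_t f_\ell^{\mathrm{lo}}\|^2\le\theta_K\|f_\ell^{\mathrm{lo}}\|^2$ with $\theta_K=(K\pi/T)^2\asymp\ell/T^2$, the bounds on $v_\ell$ reduce to $\|r_\ell\|\lesssim\|f_\ell^{\mathrm{lo}}\|$ and $\|r_\ell'\|^2\lesssim \ell T^{-2}\|f_\ell^{\mathrm{lo}}\|^2$, while the bound on $u_\ell^{\mathrm{lo}}$ needs $\int_0^\cdot r_\ell$ to be $L^2$--small.

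The crux of the argument --- and what I expect to be the main obstacle --- is the construction of $r_\ell$. One cannot use a fixed profile, because the endpoint values of $f_\ell^{\mathrm{lo}}$ may be as large as $\asymp(K/T)^{1/2}\|f_\ell^{\mathrm{lo}}\|$ (Cauchy--Schwarz over the $\lesssim K$ surviving Fourier coefficients). The right choice is to take $r_\ell$ supported in the two boundary layers $[0,\delta]\cup[T-\delta,T]$ of width $\delta\asymp T/\sqrt\ell$, equal to $f_\ell^{\mathrm{lo}}$ at the endpoints, and --- crucially --- \emph{oscillating inside each layer so that $\int_0^\delta r_\ell=\int_{T-\delta}^T r_\ell=0$} (a fixed cubic profile on each layer, rescaled by the endpoint value, does the job). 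Localisation gives $\|r_\ell\|^2\lesssim\delta\,\|r_\ell\|_\infty^2\lesssim\|f_\ell^{\mathrm{lo}}\|^2$ and $\|r_\ell'\|^2\lesssim\delta^{-1}\|r_\ell\|_\infty^2\lesssim\ell T^{-2}\|f_\ell^{\mathrm{lo}}\|^2$; and the vanishing layer--integrals force $u_\ell^{\mathrm{lo}}=\int_0^\cdot r_\ell$ to be \emph{itself} supported in the two layers, whence $\|u_\ell^{\mathrm{lo}}\|^2\lesssim\delta^3\|r_\ell\|_\infty^2\asymp\ell^{-1}T^2\|f_\ell^{\mathrm{lo}}\|^2$ --- exactly what is needed, and precisely the point at which a naive construction (with $\int_0^\delta r_\ell\neq0$, so $u_\ell^{\mathrm{lo}}$ of size $\delta\|r_\ell\|_\infty$ over the whole bulk) would lose a factor $\sqrt\ell$. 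They also force $u_\ell(T)=\int_0^T r_\ell=0$, so the Dirichlet condition on $u$ holds.

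Adding $u_\ell^{\mathrm{hi}}$ and $u_\ell^{\mathrm{lo}}$, integrating the scalar bounds against the spectral measure, and reassembling $u$ and $v$ then yields i)--iv), with iv) coming from Corollary~\ref{cor_bochner}. A secondary, purely technical point is to justify the direct--integral reduction: the measurability of $\ell\mapsto(u_\ell,v_\ell)$, the convergence of the reassembled $u,v$ in $L^2(\lambda\otimes\mu)$, and the fact that the traces $u(0,\cdot),u(T,\cdot),v(0,\cdot),v(T,\cdot)$ vanish (one checks $u,v,\partial_t u,\partial_t v\in L^2(\lambda\otimes\mu)$ and passes the boundary conditions of the fibres through the integral). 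Since all the scalar constants above are uniform in $\ell\ge1$, this step is routine.
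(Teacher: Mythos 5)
Your proof is correct, but it takes a genuinely different route from the paper. The paper first reduces to $T=1$, writes $u$ as the time-antiderivative of $f+Bv$ (so the $t=T$ boundary condition becomes the constraint $\Pi v=-B^{-1}\Pi f$), and treats $i)$--$ii)$ as a constrained quadratic minimization over $v$; the formal Lagrange-multiplier solution is then controlled through operator inequalities for $Q_1=\Pi AB(A^2+B^2)^{-1}\Pi^*$, $Q_2$, $Q_3$ (Lemma~\ref{lem_step789}), proved via the sine basis in time, the spectral theorem in space, and the scalar inequality \eqref{eq_pointwise}. You instead diagonalise only in space and solve each fibre equation $\partial_t u_\ell-\ell v_\ell=f_\ell$ \emph{explicitly}: high time-frequencies ($k\gtrsim\sqrt\ell$) are absorbed into $u$, low frequencies into $v=-\ell^{-1}(f_\ell^{\mathrm{lo}}-r_\ell)$, with a boundary-layer corrector $r_\ell$ of width $\delta\asymp T/\sqrt\ell$ whose vanishing layer integrals keep $u_\ell^{\mathrm{lo}}$ confined to the layers — the step that saves the factor $\sqrt\ell$ and makes all constants universal. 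I checked the fibre estimates (endpoint bound $|f_\ell^{\mathrm{lo}}(0)|\lesssim (K/T)^{1/2}\Vert f_\ell^{\mathrm{lo}}\Vert$, the three norms of $r_\ell$, $r_\ell'$, $u_\ell^{\mathrm{lo}}$) and they close; your handling of the $\ell=0$ fibre is exactly where the orthogonality-to-constants hypothesis enters, matching the paper's need for $\Pi f\in L^2_0(\mu)$. Both arguments ultimately rest on the same two ingredients, the spectral gap $\ell\ge1$ and a balance at time-frequency $\sim\sqrt\ell$ (your cutoff $K\asymp\sqrt\ell$ is the same scale as the crossover $n^2\sim\lambda$ in the paper's inequality \eqref{eq_pointwise}), and both obtain $iv)$ (and you also $iii)$) from $\Vert L_\mu v\Vert\lesssim\Vert f\Vert$ together with Lemma~\ref{lemma_bochner1} and Corollary~\ref{cor_bochner}. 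What your route buys is a constructive, more elementary proof with no formal optimization step and no operator inequalities; what the paper's route buys is an essentially optimal $v$, a proof that stays entirely at the level of functional calculus (convenient for tracking explicit constants and for the generalizations in \cite{guillin}), and no direct-integral bookkeeping. Two small points to fix in a write-up: take $\delta=T/(2\sqrt\ell)$ (or similar) so the two layers never overlap when $\ell$ is close to $1$, and state explicitly the profile conditions $\rho(0)=1$, $\rho(1)=0$, $\int_0^1\rho=0$ with $\rho\in H^1$ so that $v_\ell$ is $H^1$ across the layer boundary; the measurability/reassembly step you defer is indeed routine because your construction is explicit in $\ell$ and linear in $f_\ell$.
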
 
Here, the notation $\lesssim$ means up to a multiplicative 
universal constant. Also, when applied to tensors, 
norms should be interpreted coordinate-wise. 
For instance 
\[ 
\Vert \nabla^2 v \Vert_{L^2(\lambda\otimes \mu)}^2 = \sum_{ij} \Vert \partial_{ij} v\Vert_{L^2(\lambda\otimes \mu)}^2 . 
\]

The results from the previous subsection section will play a role at some 
point but there is still some work to be done here.  
The main difficulty is that there is a mismatch between the 
boundary condition for $u$ and that for $v$.

As a preliminary step to the proof of Propostion~\ref{prop_key}, 
we notice that it is enough to consider the case where $T=1$.
Indeed given three functions $u,v,f$ in  
\[ 
L^2 ( [0,T]\times \R^n , \lambda \otimes \mu) , 
\] 
we let $\tilde u,\tilde v, \tilde f$ be the functions 
in $L^2 ( [0,1]\times \R^n , \lambda \otimes \mu)$ given by 
\[ 
\begin{split} 
& \tilde f (t,x) = f ( tT,x )  \\
& \tilde u(t,x) = T^{-1} \cdot  u ( tT , x )  \\
& \tilde v(t,x) = v ( tT , x ) . 
\end{split} 
\] 
Obviously $f$ is orthogonal to constants if and only $\tilde f$ is, and $u,v$ 
satisfy the Dirichlet boundary conditions if and only if $\tilde u,\tilde v$ do. 
It is also clear that the divergence
equation~\eqref{eq_laplace} for $(u,v,f)$ is equivalent to that for 
$(\tilde u,\tilde v,\tilde f)$. Lastly, elementary computations show that the Sobolev type estimates $i),ii),iii),iv)$ for $u,v,f,T$ amount to the same estimates for 
$\tilde u,\tilde v,\tilde f,1$. Therefore we assume that $T= 1$ from now on. 
\begin{remark}
In the same way, while we stated this proposition under the assumption 
that $C_P = 1$ (which will be sufficient for our needs later on), 
one can show a more general estimate 
that depends on the Poincar\'e constant of the measure $\mu$
by rescaling the space variable.
\end{remark} 
%
It will be convenient to introduce the following notations. 
We let $A = -\partial_t \partial_t$ with Dirichlet boundary conditions. 
By a slight abuse of notation we will view it either as an 
operator on $L^2 ( \lambda)$ or as an operator on 
$L^2 ( \lambda \otimes \mu )$ that acts on the time variable only. 
In both cases it is a 
self adjoint positive semi-definite unbounded operator. 
We also let $B = - L_\mu$, and again we view this either as 
a positive semi-definite operator on $L^2(\mu)$ or on 
$L^2 (\lambda\otimes \mu)$. 
%
%
%
We also denote by $L^2_0$ the subspace of $L^2$ consisting of
functions which are orthogonal to constants. 
Lastly for $h \in L^2 (\lambda\otimes \mu)$ we define 
\[ 
\Pi h(x) = \int_0^1 h(t,x) \, dt . 
\] 
In other words $\Pi h$ is the second marginal
of $h$. Note that $\Pi \colon L^2 (\lambda \otimes \mu) \to L^2 (\mu)$ 
is a bounded operator, and that its adjoint $\Pi^*$ is the canonical 
injection of $L^2 ( \mu )$ into $L^2 (\lambda \otimes \mu)$. Observe 
that both $\Pi$ and $\Pi^*$ preserve $L^2_0$. 
The next lemma is the key to the proof of Proposition~\ref{prop_key}. 
\begin{lemma} \label{lem_step789}
Recall that we assume that $T=1$. Consider the following 
operators 
\begin{itemize}
\item $Q_1 := \Pi AB(A^2+B^2)^{-1} \Pi^*$;
\item $Q_2 := \Pi A^4 (A^2+B^2)^{-2} \Pi^*$;
\item $Q_3 := \Pi A^2 (A^2+B^2)^{-1}$.   
\end{itemize}    
The following properties hold true:
\begin{enumerate}[a)]
\item Both $Q_1$ and $Q_2$ are bounded and positive semi-definite on 
$L^2(\mu)$ and leave the subspace $L^2_0 (\mu)$ invariant. The operator 
$Q_3$ is bounded from $L^2 (\lambda\otimes \mu)$ to $L^2 (\mu)$ 
and maps $L^2_0 (\lambda \otimes \mu)$ to $L^2_0(\mu)$.  
\item As operators on $L^2_0(\mu)$ the operators $Q_1,Q_2$ satisfy $Q_2 \leq C \cdot Q_1$ for the order given by the cone of positive semi-definite operators, 
and where $C$ is a universal constant.  
\item As operators on $L^2_0 (\mu)$ both $Q_1$ and $Q_2$ are actually 
positive definite. 
\item On $L^2_0(\mu)$, we also have $0 \leq Q_3^* Q_1^{-1} Q_3 \leq C \cdot\id$.
\end{enumerate} 
\end{lemma}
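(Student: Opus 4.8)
The operators $A$ (acting on the time variable, with Dirichlet conditions on $[0,1]$) and $B=-L_\mu$ (acting on the space variable) commute, since they act on different factors of the tensor product $L^2(\lambda\otimes\mu)$. So the plan is to diagonalize everything simultaneously. The operator $A$ has eigenfunctions $e_m(t)=\sqrt2\sin(m\pi t)$ with eigenvalues $a_m=m^2\pi^2$, $m\geq1$; and since $\mu$ has Poincaré constant $1$, the operator $B$ has spectrum contained in $\{0\}\cup[1,+\infty)$, with $0$ corresponding to constants. I would write $B=\int_{[1,\infty)}\lambda\,dE_\lambda$ on $L^2_0(\mu)$ via the spectral theorem. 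Then for $g\in L^2_0(\mu)$, expanding $\Pi^* g=\sum_m \langle e_m,1\rangle_{L^2(\lambda)}\, e_m\otimes g$ and using $\int_0^1 e_m = \frac{\sqrt2}{m\pi}(1-(-1)^m)$, which vanishes for even $m$ and equals $\frac{2\sqrt2}{m\pi}$ for odd $m$, one gets that $Q_1,Q_2$ become, on $L^2_0(\mu)$, the bounded Borel functional calculus operators
\[
Q_1 = \sum_{m\ \mathrm{odd}} \frac{8}{m^2\pi^2}\cdot \frac{a_m B}{a_m^2+B^2}, \qquad
Q_2 = \sum_{m\ \mathrm{odd}} \frac{8}{m^2\pi^2}\cdot \frac{a_m^4}{(a_m^2+B^2)^2}.
\]
From these explicit expressions the four claims reduce to scalar inequalities in the variable $\lambda\in[1,\infty)$.

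For part a), boundedness and positivity of $Q_1,Q_2$ are immediate from the formulas above: each summand is a nonnegative bounded Borel function of $B$, and the prefactors sum to $\sum_{m\ \mathrm{odd}} 8/(m^2\pi^2)=1$ (since $\sum_{m\ \mathrm{odd}}1/m^2 = \pi^2/8$), while each scalar factor $a_m\lambda/(a_m^2+\lambda^2)\leq 1/2$ and $a_m^4/(a_m^2+\lambda^2)^2\leq1$ are bounded by $1$. Invariance of $L^2_0(\mu)$ holds because $\Pi$ and $\Pi^*$ preserve $L^2_0$ (as noted in the text) and $A,B$ commute with the decomposition. For $Q_3=\Pi A^2(A^2+B^2)^{-1}$, the same expansion shows it sends $h\in L^2_0(\lambda\otimes\mu)$, $h=\sum_m e_m\otimes h_m$, to $\sum_{m\ \mathrm{odd}}\frac{2\sqrt2}{m\pi}\cdot\frac{a_m^2}{a_m^2+B^2}h_m\in L^2_0(\mu)$; boundedness follows from Cauchy–Schwarz using $\sum_{m}1/m^2<\infty$ and $a_m^2/(a_m^2+\lambda^2)\leq1$. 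For b), I would compare the scalar factors term by term: one needs $a_m^4/(a_m^2+\lambda^2)^2 \lesssim a_m\lambda/(a_m^2+\lambda^2)$ uniformly in $m$ (odd) and $\lambda\geq1$, i.e. $a_m^3/(a_m^2+\lambda^2)\lesssim\lambda$; writing $a_m\leq a_m^2+\lambda^2$ twice gives $a_m^3/(a_m^2+\lambda^2)\leq a_m \leq (a_m^2+\lambda^2)/\lambda$... more carefully one checks $a_m^3\lesssim\lambda(a_m^2+\lambda^2)$ by AM–GM since $a_m^3 = a_m^2\cdot a_m$ and either $a_m\lesssim\lambda$ (done) or $a_m\gtrsim\lambda$ in which case $\lambda\cdot a_m^2 \gtrsim ...$; in any case a case split on $a_m\lessgtr\lambda$ handles it. Summing the term-by-term domination with the common prefactors $8/(m^2\pi^2)$ gives $Q_2\leq C Q_1$ in the operator order on $L^2_0(\mu)$.

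For c), positive-definiteness: on $L^2_0(\mu)$, $B\geq1>0$, so for any fixed odd $m$ the operator $a_m B/(a_m^2+B^2)$ is bounded below by $\inf_{\lambda\geq1} a_m\lambda/(a_m^2+\lambda^2)>0$ (the infimum over a single $\lambda$-interval of a continuous positive function that doesn't tend to zero as $\lambda\to\infty$ only if we're careful — actually $a_m\lambda/(a_m^2+\lambda^2)\to 0$ as $\lambda\to\infty$, so there's no uniform positive lower bound over all of $[1,\infty)$). This is the point requiring care: $Q_1$ is positive \emph{definite} but not bounded below, so I would argue injectivity instead — $Q_1 g=0$ with $g\in L^2_0$ forces $\langle Q_1 g,g\rangle=0$, i.e. $\sum_{m\ \mathrm{odd}}\frac{8}{m^2\pi^2}\langle \frac{a_mB}{a_m^2+B^2}g,g\rangle=0$, and since each term is nonnegative and $\frac{a_m\lambda}{a_m^2+\lambda^2}>0$ on $\mathrm{spec}(B)\subset[1,\infty)$, the spectral measure of $g$ must be zero, so $g=0$; same for $Q_2$. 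Finally d): since $Q_2\leq CQ_1$ and $Q_1\geq0$ is injective on $L^2_0(\mu)$, I would invoke the standard fact (Douglas' lemma / operator range argument) that $Q_2\leq CQ_1$ implies $Q_2 = Q_1^{1/2} R Q_1^{1/2}$ for some $0\leq R\leq C$, hence $Q_1^{-1/2}Q_2 Q_1^{-1/2}\leq C$; combined with the factorization $Q_3 Q_3^* = Q_2$ restricted appropriately — actually one checks directly from the expansions that $Q_3 Q_3^* = \sum_{m\ \mathrm{odd}}\frac{8}{m^2\pi^2}\frac{a_m^4}{(a_m^2+B^2)^2} = Q_2$ on $L^2_0(\mu)$ — we get $Q_3^* Q_1^{-1} Q_3 \leq C\cdot\mathrm{Id}$ as a quadratic form, with positivity obvious. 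The main obstacle is exactly this last interplay: making rigorous sense of the unbounded inverse $Q_1^{-1}$ on $L^2_0(\mu)$ (defined on the dense domain $\mathrm{Ran}\,Q_1$) and justifying $Q_3^*Q_1^{-1}Q_3\leq C\,\mathrm{Id}$ as an inequality of possibly-unbounded quadratic forms; I would phrase it as: $\langle Q_1^{-1}Q_3 h, Q_3 h\rangle \leq C\|h\|^2$ for all $h$ with $Q_3 h\in\mathrm{Ran}\,Q_1$, which follows from $Q_3Q_3^*=Q_2\leq CQ_1$ and the elementary lemma that $S^*S\leq CQ$ implies $\langle Q^{-1}Sh,Sh\rangle\leq C\|h\|^2$.
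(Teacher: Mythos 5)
Your setup (simultaneous diagonalization in the sine basis together with the spectral theorem for $B$ on $L^2_0(\mu)$, the explicit series for $Q_1$ and $Q_2$, the injectivity argument for c), and the Douglas-type computation reducing d) to the factorization $Q_3Q_3^*=Q_2\le C\,Q_1$) is the same as the paper's. But your proof of b), on which d) then hinges, has a genuine gap: the term-by-term domination you invoke is false. You need $a_m^3\lesssim \lambda\,(a_m^2+\lambda^2)$ uniformly over odd $m$ and $\lambda\ge 1$, and this fails whenever $a_m\gg\lambda$: take $\lambda=1$ (the bottom of the spectrum of $B$ on $L^2_0(\mu)$) and $m\to\infty$; then the $m$-th term of your series for $Q_2$ is $\frac{8}{\pi^2m^2}\cdot\frac{a_m^4}{(a_m^2+1)^2}\approx\frac{8}{\pi^2m^2}$, while the $m$-th term of your series for $Q_1$ is $\frac{8}{\pi^2m^2}\cdot\frac{a_m}{a_m^2+1}\approx\frac{8}{\pi^2m^2a_m}$, so the ratio of the two terms blows up like $a_m=\pi^2m^2$. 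Your case split covers $a_m\lesssim\lambda$, but the opposite case does not ``handle it'' (your own ellipsis is where the argument breaks), and no universal constant makes the pointwise comparison true.

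The inequality $Q_2\le C\,Q_1$ holds only after summing over odd $m$, and the summation is essential: the high-frequency terms $a_m\gtrsim\lambda$ of the $Q_2$-series are individually not dominated, but their total contribution is $\lesssim\sum_{m\gtrsim\sqrt{\lambda}}m^{-2}\lesssim\lambda^{-1/2}$ thanks to the $1/m^2$ prefactors, which is comparable to the \emph{whole} $Q_1$-series, bounded below by $c\,\lambda^{-1/2}$ via comparison with $\int_0^\infty\frac{\lambda\,dx}{\pi^4x^4+\lambda^2}$ (using $\lambda\ge1$ to absorb the single-term correction). This sum-versus-sum comparison — showing both sides of~\eqref{eq_pointwise} are of order $\lambda^{-1/2}$ — is exactly the step the paper carries out and is what must replace your term-by-term claim. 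The remaining parts (a), c), and the quadratic-form treatment of the unbounded $Q_1^{-1}$ in d), modulo the slip that your auxiliary lemma should assume $SS^*\le C\,Q$, i.e. $Q_3Q_3^*\le C\,Q_1$, not $S^*S\le C\,Q$) are correct and in line with the paper once b) is repaired.
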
 
\begin{proof}
We start with claim $a)$. 
Since $A$ and $B$ commute and are positive semi-definite 
we have 
\[ 
0 \leq AB (A^2 + B^2 )^{-1} \leq \frac 12 \id .  
\] 
In particular $AB (B^2 + A^2 )^{-1}$ 
is a bounded positive semi-definite operator on $L^2(\lambda \otimes \mu)$.
Observe that $\mathbbm 1$ is an eigenfunction: 
\[ 
AB(A^2+B^2)^{-1} \mathbbm 1 = A (B^2+A^2)^{-1} B \mathbbm 1 = 0 .   
\]
Since $AB(A^2+B^2)^{-1}$ is self-adjoint, it also leaves the space $\mathrm{span}\{\mathbbm 1\}^\perp = L^2_0( \lambda \otimes \mu)$ invariant. Composing 
by $\Pi$ and $\Pi^*$ we thus see that $Q$ is a positive semi-definite bounded operator on $L^2 (\mu)$ leaving $L^2_0 (\mu)$ invariant.
In a similar way we have
\[ 
A^2  (A^2 + B^2)^{-1} = \mathbbm 1 - B^2(A^2+B^2)^{-1} \mathbbm 1 = \mathbbm 1.  
\] 
Thus $A^2 (B^2 + A^2)^{-1}$ leaves $L^2_0 ( \lambda \otimes \mu)$ invariant. 
It is also clearly positive semi-definite and bounded. This 
implies easily that 
$Q_2$ is also a positive semi-definite operator on $L^2 (\mu)$ that
preserves $L^2_0 (\mu)$, and that $Q_3 \colon L^2 (\lambda\otimes \mu) \to L^2(\mu)$ is bounded and maps $L^2_0 (\lambda \otimes \mu)$ to $L^2_0(\mu)$. 
\\
We move one to the proof of $b)$.  
We introduce the sine basis of $L^2 ([0,1],\lambda)$, namely
\begin{equation}\label{eq_sine}
e_n (t) = \sqrt 2 \cdot \sin ( \pi nt ) \, \quad n \geq 1 .
\end{equation} 
This is an orthonormal basis of $L^2([0,1],\lambda)$ for which $A$ is 
diagonal. More precisely
\[ 
A e_n = n^2 e_n , \quad \forall n \geq 1 . 
\] 
An element $f\in L^2 (\lambda \otimes \mu)$ can be written uniquely 
\begin{equation}\label{eq_mlmlml}
f = \sum_{n\geq 1} e_n \otimes f_n 
\end{equation}
where $(f_n)$ is a sequence of elements of $L^2 (\mu)$ such that 
the series $\sum_{n\geq 1} \Vert f_n\Vert_{L^2 (\mu)}^2$ is converging.
Here for $h\in L^2(\lambda)$ and $k\in L^2(\mu)$, we denote by $h\otimes k$ 
the function that maps $(t,x)$ to $h(t) k(x)$. 
Then the action of $A$ is simply given by the equation 
\begin{equation}\label{eq_sinetruc}
A f = \sum_{n\geq 1} n^2 e_n \otimes f_n .  
\end{equation}
A bit more precisely, $f$ belongs to the domain of $A$ 
if and only if the series $\sum_{n\geq 1}n^4 \Vert f_n\Vert_{L^2 (\mu)}^2$
is converging, in which case~\eqref{eq_sinetruc} holds true. 
In the same way $f$ belongs to the domain of $B$ if every function $f_n$ does
and if the series $\sum_{n\geq 1} \Vert B f_n\Vert^2_{L^2 (\mu)}$ is 
converging. When this is the case we have 
\[ 
B f = \sum_{n\geq 1} e_n \otimes ( B f_n )  . 
\] 
Therefore 
\[ 
(A^2+B^2) f = \sum_{n\geq 1}  e_n \otimes \left( (n^4 \id+B^2) f_n \right) , 
\] 
and also 
\begin{equation}\label{eq_ppppp}
AB(A^2+B^2)^{-1} f = \sum_{n\geq 1} e_n \otimes \left(  n^2 B (n^4 \id+B^2)^{-1} f_n \right) . 
\end{equation} 
The constant function admits the following decomposition in the sine basis: 
\[ 
\mathbbm 1 = \frac{2 \sqrt 2}{\pi} \sum_{n \text{ odd}} \frac{e_n}{n}. 
\]
Therefore, for $g\in L^2_0 (\mu)$ we have 
\[ 
\Pi^* g = \mathbbm 1 \otimes g = \frac{ 2 \sqrt 2}{\pi} 
 \sum_{n \text{ odd}} \frac{e_n \otimes g}n ,  
\] 
and for $f = \sum_{n\geq 1} e_n \otimes f_n \in L^2_0 (\lambda \otimes \mu)$
we have 
\[ 
\Pi f = \frac{ 2 \sqrt 2}{\pi} \sum_{n\text{ odd}} \frac{f_n}n . 
\]  
Combining with~\eqref{eq_ppppp} we thus get
\[
Q_1 = \Pi  AB (A^2+B^2)^{-1} \Pi^* = \frac{8}{\pi^2} 
\sum_{n \text{ odd}}  B( n^4 \id + B^2 )^{-1}  .   
\]  
In the same way we have  
\begin{equation}\label{eq_QQQ} 
Q_2 = \Pi A^4 (A^2 + B^2)^{-2} \Pi^* = \frac 8{\pi^2} \cdot 
\sum_{n \text{ odd}}  n^6 (n^4 \id+B^2)^{-2} . 
\end{equation}
Therefore the desired inequality $b)$ can be reformulated as 
\begin{equation}\label{eq_refo} 
\sum_{n \text{ odd}} n^6 (n^4 \id+B^2)^{-2}  
\leq  C \sum_{n \text{ odd}} B  (n^4 \id+B^2)^{-1}  ,  
\end{equation} 
as self-adjoint operators on $L^2_0(\mu)$. 
Recall that $\mu$ satisfies Poincar\'e with constant $1$, which by Lemma~\ref{lemma_bochner1} shows that the spectral gap of $B=-L_\mu$ 
is at least $1$. This means that when we restrict to $L^2_0(\mu)$ the 
spectrum of $B$ is included in the interval $[1,+\infty)$. 
We claim that this information alone yields~\eqref{eq_refo}. 
In other words the inequality would be true for any Hilbert space, and any unbounded positive definite operator $B$ whose spectrum lies in the 
interval $[1,\infty)$. 
Indeed, by the spectral theorem 
there is some resolution $(E_\lambda)$ 
of the identity of $L^2_0 (\mu)$ such that 
\begin{equation}\label{eq_spect}
B = \int_\R \lambda \, d E_\lambda . 
\end{equation} 
Moreover, since the spectrum of $B$ is above $1$, 
the integral in~\eqref{eq_spect} 
is actually supported on $[1,\infty)$ rather than on the whole line. 
Then for any integer $n\neq 0$ and any function $g\in L^2_0 (\mu)$ 
we have  
\begin{equation}\label{eq_hhhhhhdddd}
\langle B(n^4 \id + B^2)^{-1} g , g\rangle_{L^2(\mu)}
= \int_1^\infty \frac{\lambda}{n^4+\lambda^2} \, \nu_g(d\lambda) 
\end{equation} 
where $\nu_g$ is the spectral measure associated to $g$, namely the measure on 
$[1,\infty)$ whose distribution is given by 
\[ 
\nu_g ( [1,\lambda] ) = \langle E_\lambda g ,g \rangle_{L^2(\mu)}  , \quad 
\forall \lambda \geq 1 . 
\]  
There is an analogous formula for $n^6(n^4+B^2)^{-2}$ and~\eqref{eq_refo}
can thus be reformulated as 
\[ 
\int_1^\infty \sum_{n \text{ odd}} \frac{n^6}{ (n^4 +\lambda^2)^2 } \, \nu_g ( d\lambda) \leq C \int_1^\infty \sum_{n \text{ odd}}  \frac{\lambda}{ n^4 +\lambda^2} 
\, \nu_g(d\lambda) , \quad \forall g\in L^2_0 (\mu) .
\] 
But this would obviously follow from the pointwise inequality 
\begin{equation}\label{eq_pointwise}
\sum_{n \text{ odd}} \frac{n^6}{ (n^4 +\lambda^2)^2 } 
\leq C \sum_{n \text{ odd}}  \frac{\lambda}{ n^4 +\lambda^2 } , 
\quad \forall \lambda \geq 1.  
\end{equation} 
Thus all we have to do is to prove this relatively elementary 
inequality, which can be done as follows.
Since $n^6 \leq (n^4 + \lambda^2)^{3/2}$ it is 
enough to prove that
\[ 
\sum_{n \text{ odd}} \frac 1 { (n^4 +\lambda^2)^{1/2} } 
\leq C \lambda \sum_{n \text{ odd}}  \frac 1 { n^4 +\lambda^2 } , 
\quad \forall \lambda \geq 1.  
\]
Now observe that 
\begin{equation}\label{eq_smett}
\begin{split}  
\sum_{n \text{ odd}} \frac 1 { (n^4 +\lambda^2)^{1/2} } 
& \leq \sum_{n\geq 1} \frac 1 { (n^4 +\lambda^2)^{1/2} } \\
& \leq \int_0^\infty  \frac 1 { (x^4 +\lambda^2)^{1/2} } \, dx 
= C_1 \lambda^{-1/2} 
\end{split} 
\end{equation} 
where $C_1 = \int_0^\infty (x^4+1)^{-1/2} \, dx$. 
In a similar way 
\[ 
\begin{split} 
\sum_{n \text{ odd}} \frac 1 { n^4 +\lambda^2 }
& \geq \frac 12 \sum_{n\geq 1}  \frac 1 { n^4 +\lambda^2 } \\
& =  - \frac1{2\lambda^2} + 
\frac 12 \sum_{n\geq 0}  \frac 1 { n^4 +\lambda^2 } \\ 
& \geq -\frac1{2\lambda^2} + \frac12 \int_0^\infty \frac 1{x^4 + \lambda^2} \, dx 
= -\frac1{2\lambda^2} + \frac{C_2}{\lambda^{3/2} }
\end{split} 
\] 
where $C_2 = (1/2) \int_0^\infty (x^4+1)^{-1} \, dx$. This clearly 
implies that 
\begin{equation}\label{eq_mlkjhhggged}
\sum_{n \text{ odd}} \frac 1 { n^4 +\lambda^2 } \geq C_3 \lambda^{-3/2} 
\end{equation}
for all $\lambda \geq 1$ and some universal constant $C_3$.
Putting~(\ref{eq_smett}) and~(\ref{eq_mlkjhhggged}) together yields~(\ref{eq_pointwise}) and finishes the proof of~$b)$. 
\\
To prove $c)$, observe that 
if $g\in L^2_0(\mu)$ is different from $0$ then
the spectral measure $\nu_g$ is not identically zero. 
As a result  
\[ 
\langle n^6(n^4+B^2)^{-4} g , g \rangle_{L^2(\mu)} 
= \int_1^\infty \frac{n^6}{(n^4+\lambda^2)^2} \nu_g(d\lambda) >0.   
\]
Summing over odd integers and combining with~(\ref{eq_QQQ}) we 
see that $Q_2$ restricted to $L^2_0(\mu)$ is positive definite.
By claim $b)$ this implies that also $Q_1$ is positive definite
on $L^2_0(\mu)$. 
\\
Lastly, $a)$ and $c)$ imply that the operator $Q_3^* Q_1^{-1} Q_3$ is 
well-defined and positive semi-definite on $L^2_0(\mu)$. A priori this operator 
could be unbounded. However, since $\Pi^* \Pi$ is the identity map, 
we have $Q_3 Q_3^* = Q_2$. As a result 
\[ 
(Q_3^* Q_1^{-1} Q_3)^2 = Q_3^* Q_1^{-1} Q_2 Q_1^{-1} Q_3. 
\]
Now $b)$ implies that 
\[ 
Q_1^{-1} Q_2 Q_1^{-1} \leq C \cdot   Q_1^{-1} Q_1 Q_1^{-1} = C \cdot Q_1^{-1} . 
\]
Therefore $(Q_3^* Q_1^{-1} Q_3)^2 \leq C \cdot Q_3^* Q_1^{-1} Q_3$. 
This implies that $Q_3^* Q_1^{-1} Q_3 \leq C \cdot \id$, and finishes 
the proof of the lemma.
\end{proof} 
\begin{remark} 
As is apparent from this proof, $Q_2$ is 
also positive definite on $L^2(\mu)$, whereas constant 
functions belong to the kernel of $Q_1$. So it is important 
to restrict to $L^2_0 (\mu)$ for the inequality $Q_2 \leq C Q_1$
to be valid. 
\end{remark} 
We are now in a position to prove the key divergence estimate
of Cao, Lu and Wang. 
\begin{proof}[Proof of Proposition~\ref{prop_key}] 
We focus on the Sobolev estimates $i)$ and $ii)$ for now.
For these two estimates the semi-convexity hypothesis is 
not needed, only the hypothesis on the Poincar\'e constant
matters.  
We need to find $u,v$ satisfying the Dirichlet boundary conditions,
the equation $\partial_t u + L_\mu v = f$, and such that 
\begin{equation}\label{eq_goal} 
\Vert \nabla u \Vert_{L^2(\lambda \otimes \mu)}^2 + \Vert \partial_t v \Vert_{L^2(\lambda \otimes \mu)}^2 \lesssim \Vert f \Vert_{L^2(\lambda\otimes \mu)}^2 .  
\end{equation} 
Note that the choice of the 
function $v$ determines $u$. Namely $u$ is the anti-derivative 
of $f - L_\mu v = f + B v$:
\begin{equation}\label{eq_12345}
u(t,x) = \int_0^t f(s,x) + B v(s,x) \, ds . 
\end{equation} 
Then $u$ satisfies the $t=0$ boundary condition by definition, 
whereas the $t=1$ boundary condition becomes
\[ 
\int_0^1 f(t,x) + B v(t,x) \, dt = 0  .
\] 
In other words $\Pi ( f + Bv ) = 0$. 
Since $B$ commutes with $\Pi$ this amounts to 
\begin{equation}\label{eq_constraint} 
\Pi v = - B^{-1} \Pi f .  
\end{equation} 
Recall that $f$ is assumed to be centered, so 
that $\Pi f \in L^2_0 (\mu)$. Since $B^{-1}$ 
is bounded operator on $L^2_0 (\mu)$, 
the function $B^{-1} \Pi f$ is a well-defined element of 
$L^2_0 (\mu)$. 
Integrating by parts in time and space we see that 
\[ 
\Vert \nabla \partial_t v \Vert_{L^2 (\lambda\otimes \mu) }^2 
= \langle A B v , v \rangle_{ L^2 (\lambda\otimes \mu)  } .  
\] 
In a similar way, if $u$ satisfies~\eqref{eq_12345} 
and the Dirichlet boundary condition then 
\[ 
\Vert \nabla u\Vert_{L^2 (\lambda\otimes \mu)}^2 = 
\langle A^{-1} B ( B v + f ) , B v + f \rangle_{L^2 (\lambda \otimes \mu)}. 
\] 
We thus have to consider the following optimization problem: 
\begin{equation}\label{eq_opt123}
\text{minimize } \langle A^{-1} B ( B v + f ) , B v + f \rangle_{L^2 (\lambda\otimes \mu)} + \langle A B v , v\rangle_{L^2 (\lambda \otimes \mu)}, 
\end{equation} 
among functions $v$ satisfying the Dirichlet boundary condition 
as well as the constraint~\eqref{eq_constraint}. We need to 
show that the value of this optimization problem 
is at most $\Vert f\Vert^2_{L^2 (\lambda\otimes \mu)}$, 
up to a multiplicative universal constant. 
Formally we can rewrite the optimization problem as 
\begin{equation}\label{eq_problem}
\begin{cases}
\text{minimize} &  
\langle \mathcal A v ,v \rangle_{L^2(\lambda\otimes \mu)}
+ 2 \langle b ,v\rangle_{L^2(\lambda\otimes\mu)} + c  \\
\text{under} & \Pi v = d ,
\end{cases} 
\end{equation} 
where 
\begin{equation}\label{eq_abc} 
\begin{split} 
& \mathcal A = A^{-1} B ( A^2 + B^2 ) \\ 
& b = A^{-1} B^2 f\\
& c = \langle A^{-1} B f,f\rangle_{L^2 (\mu)} \\
& d = B^{-1} \Pi f .
\end{split}  
\end{equation}
This formulation is not quite legitimate. Indeed, $f$ need not
belong to the domain of $B$, so $b$ and $c$ are not really well-defined. 
We can nevertheless use this formulation to guess that the 
solution of the quadratic optimization problem should be  
\begin{equation}\label{eq_vopt}
v^{\rm opt} = \mathcal A^{-1} (- b + g) 
\end{equation} 
where $g$ is the Lagrange multiplier associated to the constraint. 
Thus $g$ must belong to the orthogonal complement of the kernel of $\Pi$,
which is also the range of $\Pi^*$. 
So there is $h\in L^2_0 (\mu)$ such that $g = \Pi^* h$.
Note that 
\[ 
\mathcal A^{-1} b = A ( A^2 + B^2)^{-1} f .
\] 
Thus the constraint $\Pi v = -B^{-1} \Pi f$ becomes
\[ 
\Pi AB^{-1} (A^2 + B^2)^{-1}\Pi^* h = 
\Pi B(A^2 + B^2)^{-1} f - B^{-1} \Pi f = - B^{-1} \Pi A^2 ( A^2 + B^2)^{-1} f .   
\] 
(recall that $B$ commutes with $\Pi$). So formally $h$ is given by 
\[ 
h = - B \left( \Pi AB (A^2+B^2)^{-1} \Pi^* \right)^{-1} \Pi A^2 ( A^2+B^2)^{-1} f.
\]  
This looks unwieldy but if we use the notations from Lemma~\ref{lem_step789}
we can rewrite this as 
\begin{equation}\label{eq_hhh}  
B^{-1} h = - Q_1^{-1} Q_2 f . 
\end{equation} 
Plugging back in~\eqref{eq_vopt} 
we get the following expression for the solution of the optimization problem 
\begin{equation}\label{eq_vopt2}
\begin{split} 
v^{\rm opt} & = - B(A^2+B^2)^{-1} f  - A(A^2+B^2)^{-1} \Pi^* Q_1^{-1} Q_3 f \\
& = - B(A^2+B^2)^{-1} f  - A^{-1} Q_3^* Q_1^{-1} Q_3 f . 
\end{split} 
\end{equation} 
Although this computation was somewhat formal, this latest expression 
defines a genuine element of $L^2 (\lambda \otimes \mu) $. 
Indeed, as we have seen before $AB(A^2+B^2)^{-1}$ is a bounded operator.
Also, when we restrict to centered functions, the operator $Q_3^* Q_1^{-1} Q_3$
is bounded, thanks to the last part of Lemma~\ref{lem_step789}. 
Therefore 
\[  
AB(A^2+B^2)^{-1} f  + Q_3^* Q_1^{-1} Q_3 f   
\] 
is a well-defined element of $L^2 (\mu)$. 
Now~\eqref{eq_vopt2} shows that $v^{\rm opt}$ is well-defined 
and belongs to the range of $A^{-1}$, which equals the domain of $A$. 
In particular $v^{\rm opt}$ satisfies the Dirichlet boundary condition. 
\\
Now we focus on the value of the optimization problem~\eqref{eq_problem}.
From~\eqref{eq_vopt} we get 
\begin{equation}\label{eq_value}
\text{value} 
= - \langle \mathcal A^{-1} b ,b \rangle_{L^2(\lambda \otimes \mu)} 
+ c + \langle \mathcal A^{-1} g , g \rangle_{L^2(\lambda\otimes \mu)} .
\end{equation}
By~\eqref{eq_abc} we have
\begin{equation}\label{eq_value1}
\begin{split}  
- \langle \mathcal A^{-1} b ,b \rangle_{L^2(\lambda \otimes \mu)} + c
& = - \langle A^{-1} B^3 (A^2+B^2)^{-1} f, f \rangle_{L^2(\lambda \otimes \mu)}
+ \langle A^{-1} B f , f \rangle_{L^2(\lambda \otimes \mu)} \\
& = \langle AB (A^2 + B^2)^{-1} f, f \rangle_{L^2 (\lambda \otimes \mu)} \\
& \leq \frac 12 \Vert f\Vert_{L^2 (\lambda \otimes \mu)}^2 . 
\end{split} 
\end{equation} 
On the other hand
\[ 
\begin{split}
\langle \mathcal A^{-1} g, g \rangle_{L^2(\lambda\otimes\mu)} 
& = \langle A B^{-1} (A^2+B^2)^{-1} \Pi^* h , \Pi^* h \rangle_{L^2 (\lambda\otimes \mu)} \\ 
& = \langle \Pi AB (A^2+B^2)^{-1} \Pi^*  B^{-1} h , B^{-1} h \rangle_{L^2(\mu)} \\
& = \langle Q_1 B^{-1} h ,  B^{-1} h \rangle_{L^2(\mu)} .  
\end{split} 
\]
Plugging in~\eqref{eq_hhh}  and using Lemma~\ref{lem_step789} again 
we thus get 
\begin{equation} \label{eq_value2}
\begin{split} 
\langle \mathcal A^{-1} g, g \rangle_{L^2(\lambda\otimes\mu)} 
& = \langle Q_1 Q_1^{-1} Q_3 f , Q^{-1} Q_3 f \rangle_{L^2 (\mu)} \\ 
& = \langle Q_3^* Q_1^{-1} Q_3 f , f \rangle_{L^2 (\lambda\otimes \mu)} \\
& \leq C \cdot \Vert f\Vert_{L^2(\lambda\times\mu)}^2. 
\end{split} 
\end{equation} 
Equations \eqref{eq_value}, \eqref{eq_value1} and \eqref{eq_value2} together 
show that the value of the optimization problem~\eqref{eq_problem} is at 
most $((1/2)+C) \Vert f\Vert_{L^2(\lambda\times\mu)}^2$. 
This proves that there 
exist $u,v$ satisfying the Dirichlet boundary condition, the divergence 
equation~\eqref{eq_laplace} and the Sobolev type estimate 
\begin{equation}\label{eq_vbnc} 
\Vert \nabla u \Vert_{L^2(\lambda\times\mu)}^2 
+ \Vert \nabla \partial_t v \Vert_{L^2(\lambda\times\mu)}^2 
\leq (\frac 12+C) \cdot \Vert f\Vert_{L^2(\lambda\times\mu)}^2 . 
\end{equation} 
It remains to prove $iii)$ and $iv)$. This is where 
the semi-convexity hypothesis and the results of 
the previous subsection enter the picture. 
Since $u,v$ satisfy the Dirichlet boundary condition we have 
\[ 
\begin{split} 
-\langle \partial_t u , L_\mu v \rangle_{L^2 (\lambda \otimes \mu)}
& = \langle u, L_\mu \partial_t v \rangle_{L^2(\lambda \otimes \mu)} \\
& = - \langle \nabla u, \nabla \partial_t v \rangle_{L^2(\lambda \otimes \mu)} \\
& \leq \frac 12 \Vert \nabla u\Vert^2_{L^2(\lambda \otimes \mu)} + \frac 12 \Vert \nabla \partial_t v\Vert^2_{L^2(\lambda \otimes \mu)}  
\end{split} 
\]
On the other hand from the equation $\partial_t u + L_\mu v = f$ we get 
\[ 
\Vert f\Vert^2_{L^2(\lambda \otimes \mu)}
= \Vert \partial_t u + L_\mu v\Vert^2_{L^2(\lambda \otimes \mu)} 
\geq \Vert L_\mu v \Vert^2_{L^2(\lambda \otimes \mu)} + 2 \langle \partial_t u ,L_\mu v \rangle_{L^2(\lambda \otimes \mu)} .  
\] 
This, together with~\eqref{eq_vbnc}, yields
\begin{equation}\label{eq_stepiiiii}
\Vert L_\mu v \Vert^2_{L^2(\lambda \otimes \mu)} 
\leq \Vert f \Vert^2_{L^2(\lambda \otimes \mu)} +  \Vert \nabla u \Vert^2_{L^2(\lambda \otimes \mu)} 
+ \Vert \nabla \partial_t v \Vert^2_{L^2(\lambda \otimes \mu)}
\leq \left( \frac 32 + C \right) \Vert f\Vert^2_{L^2(\lambda \otimes \mu) } .  
\end{equation} 
Since the potential $V$ of $\mu$ is $\kappa$ semi-convex 
and since $\mu$ satisfies Poincar\'e with constant $1$, 
we can apply Lemma~\ref{lemma_bochner1}
and Corollary~\ref{cor_bochner} from the previous section 
to the function $v(t,\cdot)$. Integrating the two inequalities 
on $[0,1]$ gives 
\begin{equation}
\label{eq_stepjjjjjj}
\Vert \nabla v \Vert^2_{L^2(\lambda\otimes\mu)} \leq \Vert L_\mu v \Vert^2_{L^2(\lambda\otimes\mu)}  
\quad \text{and} \quad  \Vert \nabla^2 v \Vert^2_{L^2(\lambda\otimes\mu)} \leq ( 1+\kappa)  \Vert L_\mu v\Vert^2_{L^2(\lambda\otimes\mu)} .  
\end{equation}
Equations~\eqref{eq_stepiiiii} and~\eqref{eq_stepjjjjjj} yield the estimates 
$(iii)$ and $(iv)$. 
\end{proof} 
\subsection{Space-time Poincar\'e inequality}
We are now in a position to prove the 
integrated Poincar\'e inequality. 
The argument is essentially the same as 
that of Cao, Lu, Wang. We spell 
it out here for completeness. 
We fix a probability measure $\mu$, and a friction 
parameter $\beta$, and we let $(P_t^*)$ be the corresponding  
kinetic Fokker-Planck semigroup. Recall that the equilibrium 
measure is $\pi :=\mu \otimes \gamma$ where $\gamma$ is the standard 
Gaussian measure.   
\begin{proposition}
Assume that the potential of $\mu$ is 
$\mathcal C^2$-smooth and semi-convex, with constant $\kappa$, 
and that $\mu$ satisfies Poincar\'e with constant $1$. 
Then for every $f \in L^2 (\pi)$ and every $T >0$ we have 
\[ 
\int_0^T \var_\pi ( P^*_t f) \, dt 
\lesssim ( T^2 + T^{-2} + \beta^2 +\kappa) \cdot \int_{[0,T]\times \R^{2n}} \vert \nabla_y P_t^*f \vert^2 \, dt d\pi.  
\] 
\end{proposition}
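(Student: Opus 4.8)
The argument follows Cao, Lu and Wang~\cite{cao_lu_wang}. Write $F_t := P_t^* f$. Since $\var_\pi$ is insensitive to additive constants and $\nabla_y$ annihilates them, we may assume $\int f\, d\pi = 0$, so that stationarity of $\pi$ gives $\int F_t\, d\pi = 0$ for all $t$. Let $\bar F_t(x) := \int_{\R^n} F_t(x,y)\, \gamma(dy)$ be the marginal density on $\R^n$ and $G_t(x) := \int_{\R^n} y\, F_t(x,y)\, \gamma(dy) \in \R^n$ the associated flux. Decomposing the variance of $F_t$ along the fibers of $\R^{2n} \to \R^n$ and using that $\gamma$ satisfies Poincar\'e with constant $1$,
\[
\var_\pi(F_t) = \int_{\R^n} \var_\gamma(F_t(x,\cdot))\, d\mu(x) + \var_\mu(\bar F_t) \leq \int_{\R^{2n}} \vert \nabla_y F_t\vert^2\, d\pi + \var_\mu(\bar F_t).
\]
Integrating in $t$ and writing $\mathcal D := \int_{[0,T]\times \R^{2n}} \vert \nabla_y F_t\vert^2\, d\pi\, dt$ for the quantity on the right-hand side of the proposition, it is enough to prove $\int_0^T \var_\mu(\bar F_t)\, dt = \Vert \bar F\Vert^2_{L^2(\lambda\otimes\mu)} \lesssim (T^2 + T^{-2} + \beta^2 + \kappa)\, \mathcal D$: the leftover term contributes exactly $\mathcal D$, which is absorbed since $1 \leq T^2 + T^{-2}$. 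As each $\bar F_t$ is $\mu$-centered, $\bar F$ is orthogonal to constants in $L^2([0,T]\times \R^n, \lambda\otimes\mu)$, so Proposition~\ref{prop_key} provides $u,v$ with Dirichlet boundary conditions in time, satisfying $\partial_t u + L_\mu v = \bar F$ together with the estimates $i)$--$iv)$ with $f$ replaced by $\bar F$.

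From now on all inner products and norms are in $L^2(\lambda\otimes\mu)$, unless a subscript $\pi$ signals $L^2(\lambda\otimes\pi)$. Two elementary remarks: Gaussian integration by parts gives $G_t = \int_{\R^n} \nabla_y F_t(x,y)\, \gamma(dy)$, whence $\Vert G\Vert^2 \leq \mathcal D$ by Jensen; and integrating the Fokker--Planck equation $\partial_t F_t = \mathcal L^* F_t = \beta\mathcal L_{\rm OU} F_t - \mathcal L_{\rm Ham} F_t$ over $y$ against $\gamma$ --- the Ornstein--Uhlenbeck part integrates to zero, the Hamiltonian part yields the $\mu$-divergence of $G_t$ --- produces the continuity equation $\partial_t \bar F_t + \mathrm{div}_\mu G_t = 0$, where $\mathrm{div}_\mu$ denotes the adjoint of $-\nabla$ in $L^2(\mu)$. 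Testing the divergence equation against $\bar F$ gives $\Vert \bar F\Vert^2 = \langle \bar F, \partial_t u\rangle + \langle \bar F, L_\mu v\rangle$. For the first term, an integration by parts in time (the boundary terms vanish by the Dirichlet condition on $u$), the continuity equation, and an integration by parts in space give
\[
\langle \bar F, \partial_t u\rangle = - \langle \partial_t \bar F, u\rangle = \langle \mathrm{div}_\mu G, u\rangle = - \langle G, \nabla u\rangle \leq \Vert G\Vert\, \Vert \nabla u\Vert \lesssim T\, \mathcal D^{1/2}\, \Vert \bar F\Vert ,
\]
by estimate $i)$ of Proposition~\ref{prop_key}.

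The second term calls for a test function designed so that $\bar F$ is never differentiated. Set $\phi(t,x,y) := \langle y, \nabla_x v(t,x)\rangle$, which vanishes at $t=0$ and $t=T$ since $v$ does. On the one hand, expanding $\mathcal L_{\rm Ham}\phi = y^\top \nabla_x^2 v\, y - \nabla V\cdot\nabla_x v$ and integrating $F_t$ over $y$ against $\gamma$ (the trace part of the second-moment tensor reconstructs $L_\mu v$) gives
\[
\langle F, \mathcal L_{\rm Ham}\phi\rangle_\pi = \langle \bar F, L_\mu v\rangle + \langle \tilde M, \nabla^2 v\rangle , \qquad \tilde M_{ij}(t,x) := \int_{\R^n} (y_i y_j - \delta_{ij})\, F_t(x,y)\, \gamma(dy) .
\]
On the other hand, antisymmetry of $\mathcal L_{\rm Ham}$ in $L^2(\pi)$ and the identity $\mathcal L_{\rm Ham} F_t = \beta\mathcal L_{\rm OU} F_t - \partial_t F_t$ give $\langle F, \mathcal L_{\rm Ham}\phi\rangle_\pi = -\beta\langle \mathcal L_{\rm OU} F, \phi\rangle_\pi + \langle \partial_t F, \phi\rangle_\pi$. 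Since $\nabla_y \phi = \nabla_x v$ is independent of $y$, the Dirichlet form of the Ornstein--Uhlenbeck operator turns the first term into $\beta \langle G, \nabla v\rangle$, bounded by $\beta\, \mathcal D^{1/2}\, \Vert \bar F\Vert$ via estimate $iii)$; an integration by parts in time (the boundary vanishes because $\phi$ does) turns the second into $-\langle G, \nabla \partial_t v\rangle$, bounded by $T^{-1}\, \mathcal D^{1/2}\, \Vert \bar F\Vert$ via estimate $ii)$. Hence $\langle \bar F, L_\mu v\rangle = \beta \langle G, \nabla v\rangle - \langle G, \nabla \partial_t v\rangle - \langle \tilde M, \nabla^2 v\rangle$.

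The term $\langle \tilde M, \nabla^2 v\rangle$ is the delicate point, and the main obstacle of the proof: bounding it crudely by $\Vert \tilde M\Vert\, \Vert \nabla^2 v\Vert$ costs a factor of $n$ (because $\sum_{ij}\int (y_i y_j - \delta_{ij})^2\, d\gamma$ is of order $n^2$), which would destroy the dimension dependence. Instead one observes that $\tilde M$ sees only the fiberwise fluctuation of $F$, rewrites the term back on $\R^{2n}$ as $\langle F - \bar F,\ y^\top \nabla_x^2 v\, y - \Delta_x v\rangle_\pi$, and applies Cauchy--Schwarz there: the factor $\Vert F - \bar F\Vert_\pi$ is at most $\mathcal D^{1/2}$ by Gaussian Poincar\'e, while the exact identity $\var_\gamma(y^\top H y) = 2\Vert H\Vert_F^2$ for symmetric $H$ shows that the other factor equals $\sqrt 2\, \Vert \nabla^2 v\Vert \lesssim (1+\kappa)^{1/2}\, \Vert \bar F\Vert$ by estimate $iv)$. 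Collecting the four bounds yields $\Vert \bar F\Vert^2 \lesssim (T + T^{-1} + \beta + (1+\kappa)^{1/2})\, \mathcal D^{1/2}\, \Vert \bar F\Vert$, hence $\Vert \bar F\Vert^2 \lesssim (T^2 + T^{-2} + \beta^2 + \kappa)\, \mathcal D$; adding back the leftover term from the first paragraph completes the proof. All the integrations by parts are legitimate first for $f$ smooth and decaying with $t > 0$ --- where $F_t = P_t^* f$ is smooth --- and the general case follows by routine approximation.
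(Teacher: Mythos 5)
Your proof is correct and follows essentially the same route as the paper: reduce to the spatial marginal via the variance decomposition and Gaussian Poincar\'e, solve the divergence equation with Proposition~\ref{prop_key}, and test against the dynamics, handling the quadratic-in-$y$ term by a fiberwise Gaussian Poincar\'e/variance identity so as not to pay a dimension factor. Your bookkeeping through the flux $G$, the continuity equation and the test function $\phi=\langle y,\nabla_x v\rangle$ is just a repackaging of the paper's integrations by parts with $\mathcal L u$, $\mathcal L\partial_t v$ and $\mathcal L^2 v$, and leads to the same bound $\Vert \bar F\Vert^2\lesssim (T+T^{-1}+\beta+\sqrt{1+\kappa}\,)\,\mathcal D^{1/2}\Vert\bar F\Vert$.
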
 

\begin{proof} 
Given a function $g$ in $L^2 (\pi)$ we
denote by $M g$ its first marginal, 
namely 
\[ 
M g (x) = \int_{\R^n} g(x,y) \, \gamma (dy). 
\]  
We first note that it is enough to prove that 
\begin{equation}\label{eq_goal333}
\int_0^T \var_\mu ( M P^*_t f) \, dt 
\lesssim ( T^2 + T^{-2} + \beta^2 +\kappa) \int_{[0,T]\times \R^{2n}} \vert \nabla_y P_t^* f \vert^2 \, dt d\pi.  
\end{equation} 
Indeed we can decompose the variance and use the Gaussian Poincar\'e inequality to get   
\[
\begin{split}
\var_\pi (P_t^*f)
& = \int_{\R^n} \var_\gamma ( P_t^* f(x,\cdot) ) \, \mu (dx)  
+ \var_\mu ( M P_t^* f) \\
& \leq \int_{\R^{2n}} \vert \nabla_y P_t^* f \vert^2 \, d\pi
+ \var_\mu ( M P_t^* f) .  
\end{split}
\]
If we integrate on $[0,T]$ and combine with~\eqref{eq_goal333}
we indeed obtain the desired inequality.   
It remains to prove~\eqref{eq_goal333}. 
By a slight abuse of notation we denote 
$f(t,x,y) = P^*_tf (x,y)$. 
We can assume that $f(0,\cdot)$ is centered for $\pi$. 
This property is preserved along the diffusion, so for 
every fixed $t$, the function $f(t,\cdot)$
is centered for $\pi$, and the function 
$M f(t,\cdot)$ is centered for $\mu$. 
In particular 
\[ 
\int_{[0,T] \times \R^n} M f \, dt d\mu = 0 . 
\] 
We now invoke the previous proposition: There exist 
$u,v \in L^2 ( \lambda \otimes \mu)$ 
which vanish at $t=0$ and $t = T$ such that 
\[ 
M f = \partial_t u + L_\mu v , 
\] 
and satisfying the following Sobolev type estimates:
\begin{equation}\label{eq_sobolev}
\begin{split} 
& \Vert \nabla u \Vert_{L^2 (\lambda\otimes \mu)} \lesssim T  \Vert M f \Vert_{L^2 (\lambda \otimes \mu)} \\
& \Vert \nabla v \Vert_{L^2 (\lambda\otimes \mu)} \lesssim
 \Vert M f \Vert_{L^2 (\lambda\otimes \mu)} \\
& \Vert \nabla \partial_t v\Vert_{L^2 (\lambda\otimes \mu)}
\lesssim T^{-1}  \Vert M f \Vert_{L^2 (\lambda\otimes \mu)} \\
& \Vert \nabla^2 v \Vert_{L^2 (\lambda\otimes \mu)}
\lesssim (1+\kappa )^{1/2}  \Vert M f \Vert_{L^2 (\lambda\otimes \mu)}
\end{split} 
\end{equation}
Notice that as $v$ is a function depending only on $t$ and $x$ but not on the $y$ variable, we have  
\[ 
\mathcal L v(t,x) = \langle \nabla_x v(t,x) , y\rangle  
\] 
and also 
\[ 
\mathcal L^2 v(t,x) = \langle \nabla^2 v(t,x) y , y \rangle 
- \beta \langle \nabla v(t,x) , y \rangle - \langle \nabla v(t,x) , \nabla V(x) \rangle. 
\] 
Integrating in the $y$ variable and using the fact that the standard Gaussian 
has mean $0$ and identity covariance matrix we get 
\begin{equation}\label{eq111}
M \mathcal L^2 v= L_{\mu} v . 
\end{equation}
Therefore 
\[ 
\begin{split} 
\Vert M f \Vert^2_{L^2 ( \lambda \otimes \mu )} 
& = \langle M f , \partial_t u + L_\mu v \rangle_{L^2 ( \lambda \otimes \mu )} \\ 
& = \langle M f , \partial_t u  +  M \mathcal L^2 v \rangle_{L^2 ( \lambda \otimes \mu )} \\ 
& = \langle f ,  \partial_t u  \rangle_{L^2 ( \lambda \otimes \pi )} 
+ \langle M f , \mathcal L^2 v \rangle_{L^2 ( \lambda \otimes \pi )} \\
& = \langle f ,  \partial_t u + \mathcal L^2 v \rangle_{L^2 ( \lambda \otimes \pi )} - \langle f-M f ,  \mathcal L^2 v \rangle_{L^2 ( \lambda \otimes M )} . 
\end{split} 
\] 
Moreover, since $u$ vanishes on the boundary of $[0,T]\times \R^n$
and since $f$ satifies the kinetic Fokker-Planck equation, we have
\[ 
\begin{split} 
\langle
f , \partial_t u \rangle_{L^2 (\lambda \otimes \pi )} 
& = - \langle \partial_t f , u  \rangle_{L^2 (\lambda \otimes \pi )} \\
& =- \langle \mathcal L^* f,  u \rangle_{L^2 (\lambda \otimes \pi )} \\
& =- \langle f , \mathcal L u \rangle_{L^2 (\lambda \otimes \pi )}. 
\end{split}  
\]  
In a similar way, since $v(t,x)$ vanishes
when $t= 0$ and $t=T$, the function $\mathcal L v (t,x,y) = 
\langle \nabla v(t,x) ,y \rangle$ also has this property. 
As a result 
\[ 
\begin{split} 
\langle f , \mathcal L^2 v \rangle_{L^2(\lambda \otimes \pi)} 
& = \langle \mathcal L^* f , \mathcal L v  \rangle_{L^2(\lambda \otimes \pi)} \\
& =  \langle \partial_t f , \mathcal L v  \rangle_{L^2(\lambda \otimes \pi)}  \\
& = - \langle f ,\mathcal L \partial_t v \rangle_{L^2(\lambda \otimes \pi)}  . 
\end{split}
\]  
Therefore 
\[ 
\Vert M f \Vert^2_{L^2(\lambda \otimes \pi)}  
= - \langle  f , \mathcal Lu + \mathcal L\partial_t v \rangle_{L^2(\lambda \otimes \pi)} 
-  \langle f- M f ,  \mathcal L^2 v \rangle_{L^2(\lambda \otimes \pi)} .
\] 
Next we replace $\mathcal L^2 v$ by its expression and we observe that 
some terms cancel out. We finally obtain 
\begin{equation}\label{eq_333}
\begin{split}  
\Vert M f \Vert^2_{L^2(\lambda \otimes \pi)} 
= & - \langle f , \mathcal L  u + \mathcal L \partial_t v + \beta \mathcal L v \rangle_{L^2(\lambda \otimes \pi)} \\
& - \langle f- M f,  h \rangle_{L^2(\lambda \otimes \pi)} , 
\end{split} 
\end{equation} 
where $h$ is the function given by 
\begin{equation}\label{eq_defh}
h(t,x,y) = \langle \nabla_x^2 v(t,x) y , y \rangle . 
\end{equation} 
Now we will start writing inequalities. Recall that 
if $w$ is a function not depending on $y$ then 
$\mathcal L w = \langle \nabla_x w , y \rangle$. 
Together with the Gaussian integration by parts formula:
\[ 
\int_{\R^n} f(x,y) y\, \gamma(dy) = \int_{\R^n} \nabla_y f (x,y) \, \gamma (dy) 
\] 
and Cauchy-Schwarz we get 
\[ 
\langle f , \mathcal Lw\rangle_{L^2(\lambda \otimes \pi)} 
= \langle \nabla_y f , \nabla_x w \rangle_{L^2(\lambda \otimes \pi)} 
\leq \Vert \nabla_y f \Vert_{L^2 (\lambda \otimes \pi)} \cdot \Vert \nabla_x w \Vert_{L^2 (\lambda \otimes \mu ) } . 
\] 
This allows to upper bound the first term of the 
right-hand side of~\eqref{eq_333}. For the 
second term we use the Gaussian Poincar\'e inequality
again. Note that if $A$ is a 
fixed symmetric matrix then the gradient of $\langle Ay,y\rangle$ is $2 Ay$, 
and thus 
\[ 
\var_\gamma ( \langle Ay,y\rangle ) \leq 4 \int_{\R^n} \vert Ay\vert^2 \, d\gamma = 4 \Vert A \Vert^2_F , 
\]   
(actually if we were tracking down constants we could save a factor $2$ here).
As a result, using Cauchy-Schwarz and the definition~\eqref{eq_defh} 
of the function $h$ we get  
\[ 
\begin{split} 
\langle f- M f,  h \rangle_{L^2(\lambda \otimes \pi)} 
& \leq \int_{[0,T]\times \R^n} \var_\gamma (f(t,x,\cdot))^{1/2} \cdot \var_\gamma ( h(t,x,\cdot) )^{1/2} \, dt d\mu \\ 
& \leq \int_{[0,T]\times \R^n} \Vert \nabla_y f \Vert_{L^2 (\gamma)} \cdot 
2 \Vert \nabla_x^2 v \Vert_{F} \, dt d\mu \\
& \leq 2 \Vert \nabla_y f \Vert_{L^2 (\lambda \otimes \pi)} \cdot \Vert \nabla^2_x v\Vert_{L^2 (\lambda\otimes \mu)} . 
\end{split} 
\] 
Putting everything together we obtain 
\[ 
\Vert M f \Vert^2_{L^2(\lambda\otimes \mu)}
\leq D \cdot \Vert \nabla_y f \Vert_{L^2 (\lambda \otimes \pi)}
\] 
with 
\[ 
D = 
\Vert \nabla_x  u  \Vert_{L^2( \lambda \otimes \mu ) } 
+ \Vert \nabla_x \partial_t v \Vert_{L^2 ( \lambda \otimes \mu)} 
+ \beta \Vert \nabla_x v \Vert_{L^2 (\lambda \otimes \mu)}
+ 2 \Vert \nabla_x^2 v \Vert_{L^2 ( \lambda \otimes \mu )}  . 
\] 
Plugging in the bounds~\eqref{eq_sobolev} we finally get
\[ 
\Vert M f \Vert_{L^2( \lambda \otimes \mu)}
\lesssim ( T+T^{-1} + \beta + \sqrt{1+\kappa} )
\Vert \nabla_y f \Vert_{L^2 ( \lambda \otimes \pi)} . 
\] 
This implies~\eqref{eq_goal333}
and finishes the proof of the proposition. 
\end{proof} 
\subsection{Proof of the hypocoercive estimate} 
We can now finally prove the hypocoercive estimate, Theorem~\ref{thm_hypo2}.
Again that part of the argument is pretty much the same as in the Cao, Lu, Wang 
paper and we include it for completeness. 

As we already mentioned there is some homogeneity 
in the problem which allows us to rescale the measure 
$\mu$. In particular it is enough to prove the result 
assuming $C_P=1$, say. By the previous proposition, given 
$f \in L^2 (\pi)$, we then have 
\[  
\int_0^T \var_\pi ( P^*_t f) \, dt 
\lesssim \left( T^2 + \frac 1{T^2} + \beta^2+ \kappa \right) 
\int_{[0,T]\times \R^{2n}} \vert \nabla_y P^*_tf \vert^2 \, dt d\pi . 
\]
Since 
\[ 
\frac d{dt} \var_\pi (P^*_tf)
= - 2\beta \int_{\R^{2n}} \vert \nabla_y P_t^* f \vert^2 \, d\pi \leq 0 ,
\] 
the previous inequality implies that 
\[
\begin{split}   
T \cdot \var_\pi ( P_T^* f ) & \leq \int_0^T \var_\pi ( P^*_t f) \, dt \\
& \lesssim \frac 1\beta \left( T^2 + \frac 1{T^2} + \beta^2 + \kappa \right) 
\left( \var_\pi ( f) - \var_\pi (P^*_T f) \right) . 
\end{split} 
\] 
Hence 
\[ 
\var_\pi ( P_T^* f ) \leq \frac{ 1 }{ 1 + x(T) } \, \var_\pi (f) ,  
\] 
where
\[ 
x(T) = \frac{ \beta T}{ C( T^2 + \frac 1{T^2} + \beta^2 + \kappa) }.  
\] 
and where $C$ is a universal constant, which we can assume 
to be larger than $1$. Since $\beta^2 + T^2 \geq 2 \beta T$, 
we have $x(T) \leq 1/(2C) \leq 1/2$, no matter what the value of $T$
and $\beta$. For $x\leq 1/2$ 
we certainly have $1/(1+x) \leq \e^{-x/2}$, and thus 
\[
\var_\pi (P^*_T f) \leq \e^{- x(T)/2 } \var_\pi ( f) ,  
\]
for all $T>0$. Applying this inequality to $P^*_Tf$, then $P^*_{2T} f$, and 
so on, and using the semigroup property we get 
\[ 
\var_\pi (P^*_{nT} f) \leq \e^{- n \cdot x(T) /2 } \var_\pi ( f) ,  
\] 
for every $T>0$ and every integer $n$. Equivalently
\[ 
\var_\pi (P^*_t f) \leq \e^{-  n \cdot x(t/n) /2 } \var_\pi ( f) ,  
\] 
for every $t>0$ and every integer $n\neq 0$. It remains to optimize on $n$. 
Since 
\[ 
n \cdot x ( \frac tn )  = \frac { t \beta } { C ( \frac{t^2}{n^2} + \frac{n^2}{t^2} +\beta^2 + \kappa )} ,  
\] 
assuming $t\geq 1$ and choosing $n$ to be the integer part of $t$ 
yields 
\[ 
\var_\pi (P^*_t f) \leq \exp \left( -c  \cdot \frac{t\beta}{1+\beta^2 +\kappa} \right)  \var_\pi (f) , \quad \forall t \geq 1  
\] 
and where $c$ is a small universal constant. 
For $t\leq 1$ we can just use the trivial bound 
$\var_\pi ( P^*_t f ) \leq \var_\pi (f)$, so that the
last inequality implies  
\[ 
\var_\pi (P^*_t f) \leq 2 \cdot \exp \left( -c'' \cdot \frac{t\beta }{1+\beta^2 +\kappa} \right)  \var_\pi (f) , \quad \forall t \geq 0,    
\] 
which is the result.  
\begin{remark} 
The proof also yields a non trivial 
estimate for small values of $t$, namely 
\[ 
\var_\pi ( P^*_t f ) \leq \exp \left( - c \cdot \frac{t \beta} {1+ t^{-2} +\beta^2 + \kappa } \right) \var_\pi (f) . 
\] 
\end{remark} 

\section{The discretization argument} \label{sec_discr}
It remains to estimate the discretization error.
The approach is taken from Dalalyan~\cite{dalalyan1} 
and relies on the Girsanov change of measure formula. 
The relative entropy (a.k.a. Kullback divergence) plays an 
important role in this approach. Recall its definition: If 
$\mu,\nu$ are probability measures defined on the same 
space $X$ then 
\[ 
D ( \nu \mid \mu) = \int_X \log \left( \frac{d\nu}{d\mu} \right) \, d\nu , 
\] 
assuming that $\nu$ is absolutely continuous with respect 
to $\mu$. If this is not the case the convention is that 
the relative entropy equals $+\infty$. An important 
feature of relative entropy is that it controls the total 
variation distance: 
\[ 
TV ( \mu , \nu ) \leq \sqrt{ \frac 12 D ( \nu \mid \mu)  } . 
\] 
This is known as Pinsker's inequality. 
Another property that we will need is that taking a marginal 
can only lower the relative entropy. More generally, if $T \colon X\to Y$ 
is any measurable map then 
\begin{equation}\label{eq_contraction}
D ( T \# \nu \mid T\#\mu ) \leq D ( \nu \mid \mu ) .   
\end{equation}
Here $T \# \mu$ denotes the pushforward of the 
measure $\mu$ by the map $T$.  
Note that this contraction property is not
specific to the Kullback divergence, the 
chi-square divergence also has this property. 
Lastly we will need the following elementary 
lemma, which can be thought of as an approximate 
triangle inequality for the relative entropy.   
\begin{lemma} \label{lem_triangle} 
Let $\mu_1,\mu_2,\mu_3$ be probability 
measures defined on the same space $E$.
Then 
\[ 
D ( \mu_3 \mid \mu_1 ) \leq 2 D( \mu_3 \mid \mu_2 ) 
+ \log \left ( 1 + \chi_2 ( \mu_2 \mid \mu_1 ) \right) . 
\] 
\end{lemma}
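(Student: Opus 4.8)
The plan is to combine the chain rule for Radon--Nikodym derivatives with Jensen's inequality. First I would dispose of the trivial case: if $\mu_3$ is not absolutely continuous with respect to $\mu_1$, then it cannot be that $\mu_3 \ll \mu_2$ and $\mu_2 \ll \mu_1$ simultaneously, so $D(\mu_3\mid\mu_2)=+\infty$ or $\chi_2(\mu_2\mid\mu_1)=+\infty$ and the right-hand side is $+\infty$; likewise there is nothing to prove if $\chi_2(\mu_2\mid\mu_1)=+\infty$. So assume $\mu_3\ll\mu_2\ll\mu_1$ and $\chi_2(\mu_2\mid\mu_1)<\infty$, and write $g=d\mu_2/d\mu_1$ and $h=d\mu_3/d\mu_2$, so that $d\mu_3/d\mu_1=gh$ holds $\mu_3$-almost everywhere. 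This yields the identity
\[
D(\mu_3\mid\mu_1)=\int_E\log(gh)\,d\mu_3=\int_E\log h\,d\mu_3+\int_E\log g\,d\mu_3=D(\mu_3\mid\mu_2)+\int_E\log g\,d\mu_3 .
\]

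It then remains to bound the cross term $\int_E\log g\,d\mu_3$ by $D(\mu_3\mid\mu_2)+\log\bigl(1+\chi_2(\mu_2\mid\mu_1)\bigr)$. For this I would use $d\mu_3=h\,d\mu_2$ together with the fact that $h>0$ $\mu_3$-almost everywhere, and write
\[
\int_E\log g\,d\mu_3-D(\mu_3\mid\mu_2)=\int_E\log\frac{g}{h}\,d\mu_3\le\log\int_E\frac{g}{h}\,d\mu_3 ,
\]
the inequality being Jensen's for the concave function $\log$ and the probability measure $\mu_3$. The key point is that the argument of the logarithm on the right telescopes:
\[
\int_E\frac{g}{h}\,d\mu_3=\int_E\frac{g}{h}\,h\,d\mu_2=\int_E g\,d\mu_2=\int_E g^2\,d\mu_1=1+\chi_2(\mu_2\mid\mu_1).
\]
Combining the last two displays with the identity above gives exactly the claimed bound. (Alternatively, one may estimate the cross term directly via the Donsker--Varadhan entropy inequality applied with the test function $\log g$; this is the same computation in disguise.)

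The argument is essentially mechanical, and the only point requiring a bit of care --- hence the ``main obstacle'' --- is the measure-theoretic bookkeeping: one must check that the chain rule $d\mu_3/d\mu_1=(d\mu_3/d\mu_2)(d\mu_2/d\mu_1)$ holds $\mu_3$-a.e.\ and that the splitting $\int\log(gh)\,d\mu_3=\int\log h\,d\mu_3+\int\log g\,d\mu_3$ is legitimate, each integral being well defined with value in $(-\infty,+\infty]$. This is harmless in the only regime where the statement has content, namely $D(\mu_3\mid\mu_2)<\infty$ and $\chi_2(\mu_2\mid\mu_1)<\infty$.
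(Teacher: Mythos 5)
Your proof is correct, and it differs from the paper's in a small but genuine way. Both arguments start from the same decomposition $D(\mu_3\mid\mu_1)=D(\mu_3\mid\mu_2)+\int\log(d\mu_2/d\mu_1)\,d\mu_3$, but the paper then bounds the cross term by a change of measure back to $\mu_1$: it applies Jensen with reference measure $\mu_1$ to $f=(d\mu_2/d\mu_1)^2$, which makes $D(\mu_3\mid\mu_1)$ reappear on the right-hand side and has to be absorbed into the left (that rearrangement is where its factor $2$ comes from), and it also invokes a limiting argument so as to assume all three measures mutually equivalent. You instead bound the cross term by Jensen under $\mu_3$ applied to $g/h$ — equivalently, the Donsker--Varadhan inequality with reference measure $\mu_2$ and test function $\log g$ — so the bound $\int\log g\,d\mu_3\le D(\mu_3\mid\mu_2)+\log\int g\,d\mu_2$ comes out directly, the factor $2$ arising because the cross term costs one extra $D(\mu_3\mid\mu_2)$. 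What your route buys is a cleaner argument: no self-referential inequality to rearrange (hence no implicit finiteness of $D(\mu_3\mid\mu_1)$ needed at that step), no limiting argument, and only the natural hypotheses $\mu_3\ll\mu_2\ll\mu_1$. One small caveat: your telescoping step $\int(g/h)\,d\mu_3=\int(g/h)h\,d\mu_2=\int g\,d\mu_2$ is, in general, only an inequality $\le$, since the first integral sees only the set $\{h>0\}$ and $g$ need not vanish $\mu_2$-a.e.\ on $\{h=0\}$; as you only need an upper bound, this is harmless, but it would be cleaner to write it as $\int(g/h)\,d\mu_3=\int_{\{h>0\}}g\,d\mu_2\le\int g\,d\mu_2=1+\chi_2(\mu_2\mid\mu_1)$.
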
 
\begin{proof}
We can assume that $\mu_3 \prec \mu_2 \prec \mu_1$, where the symbol 
$\prec$ denotes absolute continuity. Indeed, if $\mu_3$ is 
not absolute continuous with respect to $\mu_2$ or if $\mu_2$ is 
not absolutely continuous with respect to to $\mu_1$ then the 
inequality trivially holds true. 
We can also assume that $\mu_1 \prec \mu_2 \prec \mu_3$ 
(and thus that all three measures are equivalent) by some limiting argument. 
Then  
\begin{equation} \label{eq_pml}
\begin{split} 
D ( \mu_3 \mid \mu_1 ) 
& = \int_E \log \left( \frac{d\mu_3}{d\mu_1} \right) \, d\mu_3 \\
& = D ( \mu_3 \mid \mu_2 ) + \int_E \log \left( \frac{d\mu_2}{d\mu_1} \right) \, d\mu_3 .
\end{split}  
\end{equation}
Moreover, by Jensen's inequality we have
\[
\begin{split}  
\int_E \log f \, d\mu_3 
& = \int_E  \log  \left( f \cdot \frac{ d\mu_1 }{d\mu_3} \right)  \, d\mu_3 + D(\mu_3 \mid \mu_1 )  \\
& \leq \log \left( \int_E f \, d\mu_1 \right) + D ( \mu_3 \mid \mu_1 ) , 
\end{split} 
\]  
for any positive function $f$. Applying this $f = (d\mu_2/d\mu_1)^2$ we get 
\[ 
2 \int_X \log \left( \frac{d\mu_2}{d\mu_1} \right) \, d\mu_3 
\leq \log \left( 1 + \chi_2 ( \mu_2 \mid \mu_1 ) \right) 
+ D( \mu_3 \mid \mu_1 ) .  
\] 
Plugging this back into~\eqref{eq_pml} yields the result. 
\end{proof} 
%
\begin{proposition} 
Assume that $V$ is gradient Lipshitz, with Lipschitz constant $L$. 
Given a probability measure $\nu$ on $\R^{2n}$ 
let $\nu_t$ be the law of the discretized kinetic Langevin diffusion~\eqref{eq_euler2} starting from $\nu$ at time $t$, 
and recall that $\nu P_t$ denotes the law of the true diffusion at 
time $t$. Then 
\[ 
TV (\nu_t , \nu P_t ) 
\lesssim \frac{ \sqrt t \cdot L \eta}{\sqrt{\beta}} 
\cdot ( \sqrt{n} + \sqrt{ \log (1+\chi_2 ( \nu \mid \pi) ) } ) , 
\] 
where $\eta$ is the time step and $\beta$ is the friction parameter. 
\end{proposition}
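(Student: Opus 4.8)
The plan is to follow the strategy of Dalalyan~\cite{dalalyan1}: compare the laws of the true diffusion and of the discretized one at the level of whole trajectories by means of Girsanov's formula, and then push everything forward to time $t$. We may assume $\chi_2(\nu\mid\pi)<\infty$, the asserted bound being trivial otherwise. Fix $t>0$ and let $\mathbb{P}$, resp.\ $\mathbb{Q}$, be the law on the path space $C([0,t],\R^{2n})$ of the solution of~\eqref{eq_diff_kin}, resp.\ of~\eqref{eq_euler2}, started from $\nu$. Since $\nu P_t$ and $\nu_t$ are the images of $\mathbb{P}$ and $\mathbb{Q}$ under the evaluation map $\omega\mapsto\omega(t)$, the contraction property~\eqref{eq_contraction} for the relative entropy together with Pinsker's inequality give $TV(\nu_t,\nu P_t)\leq\sqrt{\tfrac12\,D(\mathbb{Q}\mid\mathbb{P})}$, so it is enough to bound $D(\mathbb{Q}\mid\mathbb{P})$. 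The two stochastic equations differ only in the drift of the velocity variable, the difference being $\nabla V(X^\eta_{\lfloor s/\eta\rfloor\eta})-\nabla V(X^\eta_s)$, which takes values in the range of the degenerate diffusion matrix; hence Girsanov's theorem applies and yields
\[
D(\mathbb{Q}\mid\mathbb{P}) = \frac{1}{4\beta}\,\E_{\mathbb{Q}}\!\left[\int_0^t \bigl|\nabla V(X^\eta_s)-\nabla V(X^\eta_{\lfloor s/\eta\rfloor\eta})\bigr|^2\,ds\right],
\]
the exponential integrability needed to legitimate this being obtained by the usual localization along stopping times. Using that $\nabla V$ is $L$-Lipschitz, that $X^\eta_s-X^\eta_{k\eta}=\int_{k\eta}^s Y^\eta_r\,dr$ on $[k\eta,(k+1)\eta)$, and Cauchy--Schwarz, the integrand above is at most $L^2\eta\int_{\lfloor s/\eta\rfloor\eta}^{s}|Y^\eta_r|^2\,dr$; interchanging the order of integration then gives $D(\mathbb{Q}\mid\mathbb{P}) \leq \tfrac{L^2\eta^2}{4\beta}\int_0^t \E_{\mathbb{Q}}[|Y^\eta_r|^2]\,dr$.

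It remains to control the second moment of the velocity along the \emph{discretized} diffusion. Write $\nu_r$ for the law of $(X^\eta_r,Y^\eta_r)$, so that $\E_{\mathbb{Q}}[|Y^\eta_r|^2]=\int|y|^2\,d\nu_r$ and, by the Girsanov equivalence and the smoothing of the true semigroup, $\nu_r\ll\nu P_r\ll\pi$. The entropic variational principle $\int g\,d\rho\leq D(\rho\mid\pi)+\log\int\e^{g}\,d\pi$ applied to $g(x,y)=\tfrac14|y|^2$, for which $\int\e^{g}\,d\pi=2^{n/2}$ since the $y$-marginal of $\pi$ is the standard Gaussian, gives $\int|y|^2\,d\nu_r\leq 4\,D(\nu_r\mid\pi)+2n\log 2$. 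I then invoke the approximate triangle inequality, Lemma~\ref{lem_triangle}, applied to the triple $(\nu_r,\nu P_r,\pi)$, which gives $D(\nu_r\mid\pi)\leq 2\,D(\nu_r\mid\nu P_r)+\log\bigl(1+\chi_2(\nu P_r\mid\pi)\bigr)$. Now $\chi_2(\nu P_r\mid\pi)\leq\chi_2(\nu\mid\pi)$, since the chi-square divergence is non-increasing along the true diffusion (this is the monotonicity of $t\mapsto\var_\pi(P_t^*f)$ recorded in Section~\ref{sec_hypo}), and $D(\nu_r\mid\nu P_r)\leq D(\mathbb{Q}\mid\mathbb{P})$ by~\eqref{eq_contraction} applied to the evaluation map at time $r\leq t$. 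Setting $G:=D(\mathbb{Q}\mid\mathbb{P})$ and $K:=n+\log(1+\chi_2(\nu\mid\pi))$, these estimates combine into an inequality of the form $G\leq (L^2\eta^2 t/\beta)\,(C_1 G+C_2 K)$ with universal constants $C_1,C_2$.

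Finally I close the loop. If $L^2\eta^2 t/\beta$ lies below a suitable universal threshold the last inequality can be solved for $G$, giving $G\lesssim (L^2\eta^2 t/\beta)\,K$, whence Pinsker yields
\[
TV(\nu_t,\nu P_t)\;\lesssim\;\frac{\sqrt t\,L\eta}{\sqrt\beta}\,\sqrt K\;\lesssim\;\frac{\sqrt t\,L\eta}{\sqrt\beta}\,\Bigl(\sqrt n+\sqrt{\log(1+\chi_2(\nu\mid\pi))}\Bigr),
\]
which is the claim; in the complementary regime $L^2\eta^2 t/\beta$ is at least a universal constant, so the right-hand side already exceeds $1$ once the implied constant is fixed large enough, and the inequality is then trivial since $TV\leq 1$. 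I expect the main obstacle to be exactly this self-referential structure: the path-space entropy $D(\mathbb{Q}\mid\mathbb{P})$ is estimated through the velocity moments of the discretized chain, which are themselves controlled only through $D(\nu_r\mid\pi)$ and hence, via Lemma~\ref{lem_triangle}, through $D(\mathbb{Q}\mid\mathbb{P})$ again --- so the estimate closes only because $\eta$ is small, and is vacuous otherwise. A secondary, purely technical point is the rigorous justification of the Girsanov change of measure, dealt with by a standard localization argument.
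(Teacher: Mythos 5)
Your proof is correct and follows essentially the same route as the paper: Girsanov plus data processing and Pinsker, the Lipschitz bound reducing everything to $\int_0^t\E|Y^\eta_s|^2\,ds$, control of that moment through $D(\nu_s\mid\pi)$, Lemma~\ref{lem_triangle}, monotonicity of the chi-square divergence along the true flow, and a smallness condition on $tL^2\eta^2/\beta$ with the complementary regime handled trivially since $TV\leq 1$. The only (harmless) deviations are that you bound the velocity second moment via the Donsker--Varadhan variational formula where the paper uses a coupling together with Talagrand's transport inequality for the Gaussian, and you close the loop with a linear self-bounding inequality for the path-space entropy instead of Gronwall's lemma applied to $\E|Y^\eta_t|^2$.
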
 
\begin{proof} 
We use the following version of Girsanov: 
Let $(W_t)$ be a standard Brownian motion on $\R^n$, 
defined on some probability space $(\Omega ,\mathcal F, \mathbb P)$ 
equipped with some filtration $(\mathcal F_t)$. 
Let $(X_t)$ be a process of the form $X_t = W_t + \int_0^t u_s \, ds$ 
where $(u_t)$ is a progressively measurable process satisfying 
some integrability conditions. Fix a time 
horizon $T$. The new measure $\mathbb Q$ defined by  
\begin{equation}\label{eq_girsanov}
\frac{d \mathbb Q}{d \mathbb P} =
 \exp \left( - \int_0^t u_s dW_s -\frac 12 \int_0^t \vert u_s\vert^2 ds\right)  
\end{equation}
is a probability measure under which the process $(X_t)_{t\leq T}$ 
is a standard Brownian motion. 
Now consider the solution $(X^\eta,Y^\eta)$ of the discretized Langevin equation~\eqref{eq_euler2} initiated at some measure $\nu$, in the sense that the starting point $(X^\eta_0,Y^\eta_0)$ has law $\nu$, and consider the process $(\tilde W_t)$ given by  
\[
\tilde W_t = W_t + \frac 1{\sqrt{2\beta}} 
\int_0^t 
\left( \nabla V ( X^\eta_s ) - \nabla V( X^\eta_{\lfloor s/\eta \rfloor \eta} ) \right) \, ds . 
\] 
Observe that by definition of $\tilde W$ we have 
\[ 
\begin{cases} 
d X^\eta_t = Y^\eta_t \, dt \\
d Y^\eta_t = \sqrt{2\beta} \, d \tilde W_t - \beta Y^\eta_t \, dt - \nabla V(X^\eta_t) \, dt. 
\end{cases} 
\] 
In other words, if we replace $W$ by $\tilde W$ then $(X^\eta_t,Y^\eta_t)$ 
becomes a genuine kinetic Langevin diffusion. According to Girsanov, if 
we set $u_s = (2\beta)^{-1/2} (\nabla V ( X^\eta_s ) - \nabla V( X^\eta_{\lfloor s/\eta\rfloor  \eta} )$ 
and define $\mathbb Q$ by~\eqref{eq_girsanov}, then $\tilde W$ is 
standard Brownian motion under $\mathbb Q$, so that $(X^\eta,Y^\eta)$ is a genuine kinetic 
Langevin diffusion under $\mathbb Q$. Therefore 
\[ 
(X^\eta_t,Y^\eta_t) \# \mathbb Q = \nu P_t \quad \text{and}\quad
(X^\eta_t,Y^\eta_t) \# \mathbb P = \nu_t .  
\] 
Hence from~\eqref{eq_contraction} 
\[ 
D( \nu_t \mid \nu P_t ) 
\leq D ( \mathbb P \mid \mathbb Q ) . 
\] 
Moreover since $\int u_s \cdot dW_s$ is a martingale 
having expectation $0$, 
\[ 
 D ( \mathbb P \mid \mathbb Q ) = \E \log \frac{d\mathbb P}{d\mathbb Q} 
  = \frac1{4\beta} \int_0^t \E \vert \nabla V (X^\eta_s) -
 \nabla V( X^\eta_{\lfloor s/\eta \rfloor \eta} ) \vert^2 \, ds . 
\] 
Hence the estimate
\begin{equation}\label{eq_pdrt}
D( \nu_t \mid \nu P_t ) 
\leq \frac1{4\beta} \int_0^t \E \vert \nabla V (X^\eta_s) -
 \nabla V( X^\eta_{\lfloor s/\eta\rfloor \eta} ) \vert^2 \, ds . 
\end{equation} 
Strictly speaking the Girsanov change of measure formula 
is only valid under some integrability condition on the drift $(u_t)$. 
For instance $(u_t)$ uniformly bounded on $[0,T]$ is enough. 
However using some
localization argument one can get around this issue and show that 
inequality~\eqref{eq_pdrt} remains valid under no further assumption. 
See~\cite{follmer} for more details. Suppose 
that $t$ is an integer multiple of $\eta$. Fix an integer $k\leq N-1$, 
where $N = t / \eta$. Since $\nabla V$ is Lipschitz with constant $L$, for every $s\in [k\eta , (k+1) \eta]$
we get from Cauchy-Schwarz  
\[ 
\E \vert \nabla V (X^\eta_s) -
\nabla V( X^\eta_{k\eta} ) \vert^2 
\leq L^2 \E  \vert  X^\eta_s-X^\eta_{k\eta} \vert^2 
\leq L^2 \eta \int_{k \eta}^s \E \vert Y^\eta_u \vert^2 \, du  .
\] 
 Integrating this
inequality on the interval 
$s\in [k\eta , (k+1) \eta]$, 
summing over $k\leq N-1$, and plugging 
back in~\eqref{eq_pdrt} we finally get 
\begin{equation} \label{eq_444}
D( \nu_t \mid \nu P_t ) 
\leq \frac{L^2 \eta^2 }{4\beta} \int_0^t \E \vert Y^\eta_s\vert^2 \, ds . 
\end{equation} 
It remains to control $\E \vert Y^\eta_s\vert^2$. Intuitively 
the law of $Y^\eta_s$ should not be too far away from that of the 
second factor of the equilibrium measure, which is the 
standard Gaussian $\gamma$. 
Therefore we should have $\E \vert Y^\eta_s\vert^2 \lesssim n$. 
Let $\nu_{s,2}$ be
the second marginal of $\nu_s$, namely the law of $Y^\eta_s$. 
Observe that if $Y,G$ is a coupling of $(\nu_{s,2},\gamma)$, 
then  
\[ 
\E \vert Y^\eta_s \vert^2 = \E \vert Y\vert^2 
\leq 2 \E \vert Y_s - G \vert^2 + 2 \E \vert G\vert^2 = 
 2 \E \vert Y_s - G \vert^2 + 2 n. 
\] 
Taking the infimum over such couplings yields 
\[ 
\E \vert Y^\eta_s \vert^2 \leq  2 T_2 ( \nu_{s,2}, \gamma ) + 2 n ,
\] 
where $T_2$ is the transportation distance from $\nu_{2,s}$ to $\gamma$ 
associated to the cost function $\vert x-y\vert^2$. Next we invoke 
Talagrand's inequality~\cite{talagrand}:
\[ 
T_2 ( \nu_{s,2}, \gamma ) \leq 2 D (\nu_{s,2} \mid \gamma )   .
\] 
Combining this with the contraction principle and Lemma~\ref{lem_triangle} 
we get
\[
\begin{split}  
\E \vert Y^\eta_s\vert^2
& \leq 2 n + 4 D( \nu_{s,2} \mid \gamma )  \\
& \leq 2 n + 4 D ( \nu_s \mid \pi) \\
& \leq 2 n + 8 D(\nu_{s} \mid \nu P_s )
+ 4 \log \left( 1 + \chi_2 ( \nu P_s \mid \pi ) \right) . 
\end{split} 
\]
Recall that $\chi_2 ( \nu P_s \mid \pi )$ is non increasing with 
$s$ (see the previous section). Together with~\eqref{eq_444} we 
thus obtain 
\[ 
\E \vert Y^\eta_t\vert^2 \leq  2 n + 4 \log \left( 1 + \chi_2 ( \nu \mid \pi )  \right) + \frac{2 L^2 \eta^2}{\beta} \int_0^t \E \vert Y^\eta_s\vert^2 \, ds , 
\] 
for all $t>0$. Using Gronwall's lemma and the convexity 
on the exponential function we see that this implies 
\[ 
\int_0^t \E \vert Y^\eta_s\vert^2 \, ds  \lesssim t \cdot  ( n + \log \left( 1 + \chi_2 ( \nu \mid \pi )  \right)   ) 
\] 
as long as $2 t L^2 \eta^2 \leq \beta$. 
Plugging this back into~\eqref{eq_444} yields
\[ 
D(\nu_t \mid \nu P_t ) \lesssim \frac{t L^2 \eta^2}{\beta} \cdot ( n + \log 
( 1 + \chi_2 ( \nu \mid \pi ) )   ) .  
\] 
Combining this with Pinsker's inequality, 
we get the result under the additional hypothesis that 
$2t L^2 \eta^2 \leq \beta$. However since the total 
variation is anyways less than $1$, if this additional hypothesis 
is violated then the result trivially holds true. 
\end{proof} 

\section{Proof of the main result} 
In this section we wrap up the proof of Theorem~\ref{thm_main}. 

Let $(x_k,y_k)$ be the Markov chain induced by the 
discretization with  time step $\eta$ of the kinetic Langevin diffusion
and assume that the law of the initial point is a product 
measure whose second factor is $\gamma$. We write 
\[ 
TV ( x_k , \mu )  \leq TV ( (x_y,y_k) , \pi )
\leq TV ( (x_k,y_k) , (X_t,Y_t) ) + TV ( (X_t,Y_t) , \pi ) , 
\] 
where $t = k \eta$. 
Recall that the notation $TV ((x,y),\pi)$  
stands for the total variation between the law of $(x,y)$ and $\pi$. 
Also, as the reader probably guessed already, $(X_t,Y_t)$ denotes
the genuine kinetic Langevin diffusion initiated at the same 
point as the algorithm. To upper bound the first term we 
invoke the latest proposition. For the second term we 
use the bound
\[  
TV ( (X_t,Y_t) , \pi ) \leq \sqrt{ \chi_2 ( (X_t,Y_t) \mid \pi ) } 
\] 
and we apply Theorem~\ref{thm_hypo}. Since 
\[ 
\chi_2 ( (X_0,Y_0) \mid \pi ) = \chi_2 ( (x_0,y_0) \mid \pi ) = \chi_2 ( x_0 \mid \mu) ,  
\] 
we finally arrive at 
\begin{equation}\label{eq_6666}
TV ( x_k , \mu )  
\lesssim  \frac{L \eta \sqrt{t}}{\sqrt \beta} \cdot (\sqrt n + \sqrt{ \log (1 +\chi_2 (x_0 \mid \mu ) } ) 
+ \exp \left( - \frac{ c t \beta }{ 1+(\beta^2+\kappa) C_P}   \right) \sqrt{\chi_2 (x_0\mid \mu) } .  
\end{equation} 
We still have the freedom to optimize on $\beta$. 
Let us focus on the log-concave case for now. Namely 
we assume that $\kappa =0$. In this situation the best choice  
is $\beta = C_p^{-1/2}$ in which case the latest displays becomes  
\[
TV ( x_k \mid \mu )  
\lesssim  L \eta C_P^{1/4} t^{1/2} \cdot (\sqrt n + \sqrt{ \log (1 +\chi_2 (x_0 \mid \mu ) } ) 
+ \exp \left( - \frac{ c t}{ \sqrt{C_P}}   \right) \sqrt{\chi_2 (x_0\mid \mu) } 
\]
Fix $\epsilon \in (0,1)$. 
For the second term to be of order $\epsilon$ we need to 
take 
\[ 
t\approx \sqrt{C_P} \cdot  \log ( \frac{\chi_2(x_0\mid\mu) }\epsilon ) .   
\] 
Then we adjust $\eta$ so that the first term equals $\epsilon$, 
and the corresponding number of steps is 
\[
\begin{split}  
k = \frac t \eta 
& \approx \epsilon^{-1} C_P^{1/4} L t^{3/2} \cdot 
( \sqrt n + \sqrt{ \log (1 + \chi_2 (x_0 \mid\mu) ) }  ) \\
& \approx \epsilon^{-1} L C_P \cdot C(n,\epsilon,x_0) 
\end{split} 
\]
where 
\[ 
C (n,\epsilon ,x_0) = ( \sqrt n + \sqrt{  \log ( 1 + \chi_2 ( x_0\mid \mu) } ) \cdot \log \left(\frac{ \chi_2 ( x_0 \mid \mu) } \epsilon \right)^{3/2} . 
\]  
This finishes the proof of the main result in the log-concave case. 
For the general case, observe that being gradient Lipschitz is
a stronger assumption than being semi-convex. We can thus replace 
$\kappa$ by $L$ in~\eqref{eq_6666}. 
Note that we always have $L C_P \geq 1$. This follows from 
Caffarelli's contraction theorem~\cite{caffarelli}. 
Therefore 
\[ 
\exp \left( - \frac{ c t \beta }{ 1+(\beta^2+L)C_P} \right)
\leq \exp \left( - \frac{ c' t \beta }{ (\beta^2 +L)C_P}  \right) . 
\]
Choosing $\beta = \sqrt L$ then yields 
\[ 
TV ( x_k , \mu )  
\lesssim   \eta L^{3/4} t^{1/2} \cdot (\sqrt n + \sqrt{ \log (1 +\chi_2 (x_0 \mid \mu ) } ) 
+ \exp \left( - \frac{ c'' t}{ \sqrt{L} C_P} \right) \sqrt{\chi_2 (x_0\mid \mu) } .
\] 
We then conclude as in the log-concave case.

\bibliographystyle{plain}
\bibliography{biblio}

\end{document}